\documentclass[reqno]{amsart}
\usepackage{amsmath,amssymb,cite}
\usepackage[mathscr]{euscript}
\usepackage[toc,title,page]{appendix}

\setcounter{section}{0}

\usepackage{tikz}

\usepackage{xcolor}
\usepackage[notcite,notref]{showkeys}

\theoremstyle{plain}
\newtheorem{theorem}{Theorem}[section]
\newtheorem{proposition}[theorem]{Proposition}
\newtheorem{lemma}[theorem]{Lemma}
\newtheorem{corollary}[theorem]{Corollary}

\theoremstyle{definition}

\newtheorem{remark}[theorem]{Remark}
\newtheorem{example}[theorem]{Example}


\numberwithin{equation}{section}
\numberwithin{theorem}{section}

\def\be{\begin{equation}}
\def\ee{\end{equation}}
\def\bp{\begin{pmatrix}}
\def\ep{\end{pmatrix}}
\def\bea{\begin{eqnarray}}
\def\eea{\end{eqnarray}}

\def\\{\par\medskip}

\let\0=\noindent

\newcommand{\mc}[1]{{\mathcal #1}}
\newcommand{\mb}[1]{{\mathbf #1}}
\newcommand{\mf}[1]{{\mathfrak #1}}

\newcommand{\bb}[1]{{\mathbb #1}}
\newcommand{\ms}[1]{{\mathscr #1}}



\renewcommand{\epsilon}{\varepsilon}


\renewcommand{\div}{\mathop{\rm div}\nolimits}

\newcommand{\<}{\langle}
\renewcommand{\>}{\rangle}
\renewcommand{\Cap}{{\rm cap}}

\title[Metastability from the large deviations point of view]{
$\Gamma$-expansion of the measure-current large deviations rate
functional of non-reversible finite-state Markov chains}

\author{S. Kim}
\address{Seonwoo Kim
  \hfill\break\indent Korea Institute for Advanced Study \hfill\break\indent 85 Hoegi-ro, \hfill\break\indent
Dongdaemun-gu, Seoul 02455, South Korea.} 
\email{seonwookim@kias.re.kr}

\author{C. Landim}
\address{Claudio Landim
  \hfill\break\indent IMPA \hfill\break\indent Estrada Dona Castorina
  110, \hfill\break\indent
J. Botanico, 22460 Rio de Janeiro, Brazil\hfill\break\indent
  {\normalfont and} \hfill\break\indent CNRS UMR 6085, Universit\'e de
  Rouen, \hfill\break\indent Avenue de l'Universit\'e, BP.12,
  Technop\^ole du Madril\-let, \hfill\break\indent
F76801 Saint-\'Etienne-du-Rouvray, France.} 
\email{landim@impa.br}

\begin{document}

\begin{abstract}
Consider a sequence of continuous-time Markov chains
$(X^{(n)}_t:t\ge 0)$ evolving on a fixed finite state space $V$.  Let
$I_n$ be the measure-current large deviations rate functional for
$X^{(n)}_t$, as $t\to\infty$.  Under a hypothesis on the jump rates,
we prove that $I_n$ can be written as
$I_n = \mb I^{(0)} \,+\, \sum_{1\le p\le \mf q} (1/\theta^{(p)}_n) \,
\mb I^{(p)}$ for some rate functionals $\mb I^{(p)}$. The weights
$\theta^{(p)}_n$ correspond to the time-scales at which the sequence
of Markov chains $X^{(n)}_t$ evolves among the metastable wells, and
the rate functionals $\mb I^{(p)}$ characterise the asymptotic
Markovian dynamics among these wells. This expansion provides
therefore an alternative description of the metastable behavior of a
sequence of Markovian dynamics. Together with the
results in \cite{bgl-24,l-gamma}, this work finishes the project of
characterising the hierarchical metastable behavior of finite-state
Markov chains by means of the $\Gamma$-expansion of large deviations
rate functionals.
In addition, we present optimal conditions
under which the measure (Donsker-Varadhan) or the measure-current large deviations rate functional
determines the original dynamics, and calculate the first and second derivatives of
the measure large deviations rate functional, thereby generalising the results
for i.i.d. random variables.
\end{abstract}

\noindent
\keywords{Large deviations, empirical current, $\Gamma$-convergence, metastability}

\subjclass[2010]
{Primary 60K35, 60F10, 82C22}

\maketitle
\thispagestyle{empty}

\section{Introduction}
\label{sec1}

Let $V$ be a finite set. Denote by $\color{blue} (X_t : t\ge 0)$
a $V$-valued, irreducible continuous-time Markov chain,
whose jump rates are represented by $\color{blue} R(x,y)$.  The
generator reads as
\begin{equation*}
{ \color{blue} (\ms L f)(x)}
\,=\, \sum_{y\in V} R(x,y)\, \{\,
f(y)\,-\, f(x)\,\}\;, \quad f\colon V \to \bb R\;.
\end{equation*}
Let $\color{blue} \pi$ be the unique stationary state.  The so-called
Matrix tree Theorem \cite[Lemma 6.3.1]{FW98} provides a representation
of the measure $\pi$ in terms of arborescences of the set $V$.

Denote by $\color{blue} \ms P(V)$ the space of probability measures on
$V$ endowed with the weak topology, and by $L_t$ the empirical
measure of the chain $X_t$ defined as:
\begin{equation}
\label{48}
{ \color{blue} L_t}
\,:=\, \frac{1}{t} \int_0^t \delta_{X_s}\, {\rm d}\,s \;,
\end{equation}
where $\color{blue} \delta_x$, $x\in V$, represents the Dirac measure
concentrated at $x$. Thus, $L_t$ is a random element of
$\ms P(V)$ and $L_t (V_0)$, $V_0 \subset V$, stands for
the average amount of time the process $X_t$ stays at $V_0$ in the
time interval $[0,t]$.

As the Markov chain $X_t$ is irreducible, by the ergodic theorem, for
any starting point $x\in V$, as $t\to\infty$, the empirical measure
$L_t$ converges in probability to the stationary state $\pi$.

Denote by $E$ the set of directed edges:
\begin{equation}
\label{03}
{\color{blue} E} \,:=\,  \big \{ \, (x,y) \in V\times V: y\neq x\,,\,
R(x,y)>0 \, \big \}\;.
\end{equation}
and by $(Q_t : t\ge 0)$ the empirical flow
defined by
\begin{equation*}
{\color{blue}Q_t (x,y) } \,:=\,
\frac{1}{t}\, \sum_{0<s\le t} \mb 1 \, \{ X_{s-} = x\,,\,
X_{s} = y\}\;.
\end{equation*}
In words, $t\, Q_t (x,y)$ counts the number of times the
process $X$ jumped from $x$ to $y$ in the time interval $[0,t]$.
By \cite[Section 2.1]{bfg-ihp}, for any starting point $x\in V$, as
$t\to\infty$, $Q_t (x,y)$ converges in probability to
$\pi(x)\, R(x,y)$.

A function $J\colon E \to \bb R_+$ is called a \emph{flow}. The set of
flows defined on $E$ is represented by $\color{blue} \mf F_E$.  We
sometimes refer to a flow in $\mf F_E$ as an $E$-flow.
The divergence of a flow $J$ at a vertex $x\in V$, denoted by
$(\div J)(x)$, is given by
\begin{equation*}
(\div J)(x) \,=\, \sum_{y: (x,y)\in E} J(x,y) \;-\;
\sum_{y: (y,x)\in E} J(y,x)\;.
\end{equation*}
A flow is said to be \emph{divergence-free} if $(\div J)(x) =0$ for
all $x\in V$. Let $\color{blue} \mf F_E^{\div}$ be the set of
divergence-free $E$-flows.

Denote by $\color{blue} D(\bb R_+, W)$, $W$ a finite set, the space of
right-continuous functions $\mf f \colon \bb R_+ \to W$ with
left-limits endowed with the Skorohod topology and its associated
Borel $\sigma$-algebra. Let $\color{blue} \mb P_{\! x}$, $x\in V$, be
the probability measure on the path space $D(\bb R_+, V)$ induced by
the Markov chain $X_t$ starting from $x$. Expectation with
respect to $\mb P_{\! x}$ is represented by $\color{blue} \mb E_x$.

Let $\Phi\colon \bb R_+ \times \bb R_+ \to [0,+\infty]$ be the
function defined by
\begin{equation}
\label{Phi}
{\color{blue} \Phi (q,p)} \,:=\,
\begin{cases}
\, p  & \text{if $q=0$}\;,
\\
\, \displaystyle{ q \log \frac qp - (q-p)}
& \text{if $q\in (0,+\infty)$ and $p\in (0,+\infty)$}\;,
\\
\, +\infty & \text{if $q\in (0,+\infty)$ and $p=0$}\;. 
\end{cases}
\end{equation}
For $p>0$, $\Phi( \cdot, p)$ is a nonnegative convex function which
vanishes only at $q=p$. Actually, $\Phi( \cdot, p)$ is the large
deviations rate functional of a Poisson process with parameter $p$.

Denote by $J_{\mu, R} \in \mf F_{E}$, $\mu\in \ms P(V)$, the flow
defined by
\begin{equation}
\label{19}
{\color{blue} J_{\mu, R} (x,y)} \,:=\, \mu(x)\, R(x,y)\;, \quad
(x,y)\,\in\,E\;.
\end{equation}
Let $\Upsilon_{E,R} \colon \ms P(V)\times \mf F_{E} \to [0,+\infty]$
be the functional defined by
\begin{equation}
\label{02c}
{\color{blue} \Upsilon_{E,R} (\mu,J)} \,:=\,
\sum_{(x,y)\in E} \Phi \big(\, J(x,y) \,,\, J_{\mu, R}(x,y) \,\big) \;.
\end{equation}
Let $I\colon \ms P(V)\times \mf F_E \to [0,+\infty]$ be the
measure-current large deviations rate functional defined by
\begin{equation}
\label{02}
{\color{blue} I (\mu,J) } \,:=\,
\begin{cases}
\Upsilon_{E,R} (\mu,J) 
& J\,\in\,\mf F^{\div}_E \;,
\\
\; +\infty  & \text{otherwise}\;.
\end{cases}
\end{equation}
Bertini, Faggionato and Gabrielli \cite{bfg-ihp} proved a large
deviations principle for the pair $(L_t, Q_t)$. Mind that Condition
2.2 in \cite{bfg-ihp} is trivially satisfied in the case where $V$ is
finite. 

\begin{theorem}
\label{t01}
For every closed set $C$ of $\ms P(V) \times E$, and every
open set $G$ of $\ms P(V) \times E$,
\begin{gather*}
\limsup_{t\to\infty} \, \sup_{x\in V} \, \frac{1}{t} \, \log \, \mb P_{\! x}\big[\, (L_t, Q_t) \, \in \,
C\,\big] \,\le\, -\, \inf_{(\mu, J) \in C} \, I(\mu, J)\;,
\\
\liminf_{t\to\infty} \, \inf_{x\in V} \, \frac{1}{t} \, \log \, \mb P_{\! x} \big[\, (L_t, Q_t) \, \in \,
G\,\big] \,\ge\, -\, \inf_{(\mu, J) \in G} \, I(\mu, J)\;.
\end{gather*}
Moreover, $I$ is a convex rate function with compact level sets.
\end{theorem}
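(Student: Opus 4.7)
The strategy is the classical exponential-tilt plus change-of-measure scheme used by Bertini--Faggionato--Gabrielli \cite{bfg-ihp}; with $V$ finite, the argument simplifies considerably. For the upper bound I start from the family of mean-one exponential martingales
\begin{equation*}
M_t^g \,:=\, \exp\bigg\{\, t \sum_{(x,y)\in E} g(x,y)\, Q_t(x,y)
\;-\; t \sum_{x\in V} L_t(x) \sum_{y:(x,y)\in E} R(x,y)\,\big(\,e^{g(x,y)} - 1\,\big)\,\bigg\}\;,
\end{equation*}
indexed by arbitrary $g\colon E\to\bb R$. Markov's inequality combined with $\mb E_x[M_t^g]=1$ yields, for every compact $K\subset \ms P(V)\times\mf F_E$,
\begin{equation*}
\limsup_{t\to\infty}\,\sup_{x\in V}\,\frac{1}{t}\log \mb P_{\! x}\big[\,(L_t,Q_t)\in K\,\big] \,\le\, -\inf_{(\mu,J)\in K} \,\sup_g\, \Lambda_g(\mu,J)\;,
\end{equation*}
where $\Lambda_g(\mu,J) := \sum_e g(e)\,J(e) - \sum_x\mu(x)\sum_y R(x,y)(e^{g(x,y)}-1)$. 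An edge-by-edge Legendre inversion, separating the cases $J_{\mu, R}(e)>0$ and $J_{\mu,R}(e)=0$, identifies $\sup_g \Lambda_g(\mu,J)$ with $\Upsilon_{E,R}(\mu,J)$. The divergence-free constraint is automatic from the pathwise identity $(\div Q_t)(x) = t^{-1}[\,\mathbf 1\{X_0=x\} - \mathbf 1\{X_t=x\}\,]$, which gives $|(\div Q_t)(x)|\le 1/t$ and forces every limit flow to lie in $\mf F_E^{\div}$. Exponential tightness is immediate since $\ms P(V)$ is compact and sublevels of $\Upsilon_{E,R}$ in $J$ are bounded: $\Phi(\cdot,p)$ is coercive and $J_{\mu,R}(e)\le \max_{x,y} R(x,y)$ uniformly in $\mu$.

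For the lower bound I tilt the dynamics. Given a target $(\mu,J)$ with $\Upsilon_{E,R}(\mu,J)<+\infty$, $J\in\mf F_E^{\div}$, and, as a first step, $\mu(x)>0$ and $J(x,y)>0$ on all of $E$, define the tilted rates $\widetilde R(x,y) := J(x,y)/\mu(x)$. Divergence-freeness of $J$ is exactly the statement that $\mu$ is invariant for $\widetilde R$, and full support yields irreducibility. Under the tilted law the ergodic theorem gives $(L_t,Q_t)\to(\mu,J)$ in probability, while an explicit computation of the $(1/t)$-log Radon--Nikodym derivative shows it converges in probability to $\Upsilon_{E,R}(\mu,J)$. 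Bounding $\mb P_{\! x}[(L_t,Q_t)\in G]$ below by integrating the Radon--Nikodym derivative on the event where the empirical pair is close to $(\mu,J)$ and the derivative is close to its limit produces the lower bound on every neighbourhood of $(\mu,J)$.

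Convexity of $I$ follows from joint convexity of $\Phi$ (it is the perspective of $s\mapsto s\log s - s + 1$), linearity of $(\mu,J)\mapsto (J_{\mu,R}(e),J(e))$, and the fact that $\mf F_E^{\div}$ is an affine subspace. For goodness of level sets, $\ms P(V)$ is already compact; on $\{I\le c\}$ the bound $\Phi(J(e),J_{\mu,R}(e))\le c$ combined with $J_{\mu,R}(e)\le \max R$ bounds $J(e)$ uniformly, and lower semicontinuity descends from that of $\Phi$. The main obstacle I anticipate is extending the lower bound to targets $(\mu,J)$ without full support, for which the tilted chain $\widetilde R$ may be reducible; this requires an approximation argument constructing $(\mu_\e,J_\e)$ strictly positive, divergence-free, and satisfying $\Upsilon_{E,R}(\mu_\e,J_\e)\to\Upsilon_{E,R}(\mu,J)$. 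A secondary, purely technical point is the edge-by-edge Legendre inversion, where one must carefully treat the $J_{\mu,R}(e)=0$ case in which $g(e)$ can be driven to $-\infty$ with no penalty.
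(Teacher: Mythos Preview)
The paper does not supply its own proof of this theorem: it simply attributes the result to Bertini--Faggionato--Gabrielli \cite{bfg-ihp}, noting only that their Condition~2.2 is trivially satisfied when $V$ is finite. Your proposal correctly sketches the argument from that reference (exponential martingale for the upper bound, tilted dynamics for the lower bound, perspective-function convexity for the rate function), so in substance you are reproducing the cited proof rather than offering something different from what the paper invokes. The two technical points you flag---the approximation step needed when $(\mu,J)$ lacks full support, and the careful treatment of the Legendre inversion at edges with $J_{\mu,R}(e)=0$---are precisely the places where \cite{bfg-ihp} does additional work, so your self-assessment of the gaps is accurate.
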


By \cite[Theorem 1.6]{bfg}, the projection of the rate functional
$I(\mu, J)$ on the first coordinate yields the Donsker-Varadhan large
deviations functional \cite{dv75} for the empirical measure:
\begin{equation}
\label{60}
\inf_{J\in \mf F^{\div}_E} \, I(\mu, J) \,=\,
\sup_{u>0} \, - \, \int \frac{\ms Lu}{u}\, {\rm d}\,\mu
\,=:\, {\color{blue} \ms I (\mu)} \;.
\end{equation}

\subsection*{$\Gamma$-convergence}

Suppose now that $\color{blue} (X^{(n)}_t : t\ge 0)$ is a sequence of
$V$-valued, irreducible continuous-time Markov chains, whose jump
rates are represented by $\color{blue} R_n(x,y)$. We add a subscript
$n$ to the previous notation to refer to the Markov chain
$X^{(n)}$. In particular, the generator, the stationary state, and the
rate functionals are represented by $\color{blue} \ms L_n$,
$\color{blue} \pi_n$, $\color{blue} I_n$, and $\color{blue} \ms I_n$,
respectively.  We assume, however, that the set of directed edges $E$
introduced in \eqref{03} does not depend on $n$: for all $n\ge 1$,
\begin{equation}
\label{16}
R_n(x,y) \,>\, 0 \quad \text{if and only if}\quad
(x,y) \,\in \, E \; .
\end{equation}
In particular, for all $n\ge 1$,
\begin{equation}
\label{62}
{\color{blue} I_n (\mu,J) }\,:=\,
\begin{cases}
\Upsilon_{E,R_n} (\mu,J) 
& J\,\in\,\mf F^{\div}_E \;,
\\
\; +\infty  & \text{otherwise}\;.
\end{cases}
\end{equation}
We assume furthermore that the jump rates satisfy condition \eqref{mh}. 

In this article, we investigate the $\Gamma$-convergence of the
measure-current large deviations rate functional $I_n$.  Referring to
\cite{Br} for an overview, we recall the definition of
$\Gamma$-convergence.  Fix a Polish space $\mc X$ and a sequence
$(U_n : n\in\bb N)$ of functionals on $\mc X$,
$U_n\colon \mc X \to [0,+\infty]$. The sequence $U_n$
\emph{$\Gamma$-converges} to the functional
$U\colon \mc X\to [0,+\infty]$, i.e.
$U_n \stackrel{\Gamma}{\longrightarrow} U$, if and only if the two
following conditions are met:
\begin{itemize}
\item [(i)]\emph{$\Gamma$-liminf.} The functional $U$ is a
$\Gamma$-liminf for the sequence $U_n$: For each $x\in\mc X$ and each
sequence $x_n\to x$, we have that $\liminf_n U_n(x_n) \ge U(x)$.

\item [(ii)]\emph{$\Gamma$-limsup.} The functional $U$ is a
$\Gamma$-limsup for the sequence $U_n$: For each $x\in\mc X$ there
exists a sequence $x_n\to x$ such that $\limsup_n U_n(x_n) \le U(x)$.
\end{itemize}

The main result of the article provides a $\Gamma$-expansion of the
rate functional $I_n$.  It states that there exist sequences
$(\theta^{(p)}_n \colon n\ge 1)$, $1\le p\le \mf q$, and large deviations
rate functionals
$\mb I^{(p)}\colon \ms P(V)\times \mf F_E \to [0,+\infty]$,
$0\le p\le \mf q$ such that $\theta^{(1)}_n\to \infty$,
$\theta^{(p)}_n/\theta^{(p+1)}_n \to 0$, $1\le p <\mf q$, and 
$\theta^{(p)}_n I_n$ $\Gamma$-converges to $\mb I^{(p)}$. We summarize
this result by writing a $\Gamma$-expansion for $I_n$:
\begin{equation}
\label{14}
I_n \,=\, \mb I^{(0)} \;+\; \sum_{p=1}^{\mf q}
\frac{1}{\theta^{(p)}_n} \, \mb I^{(p)}\;.
\end{equation}

The rate function $I_n$ thus encodes all the characteristics of the
metastable behavior of the sequence of Markov chains $X^{(n)}_t$. The
factors $\theta^{(p)}_n$ provide the time-scales at which it is
observed and the rate functions $\mb I^{(p)}$ correspond to the
generators describing the synthetic evolution.

This article completes a program started in \cite{bgl-24, l-gamma},
which set out to characterise the metastable behavior of finite state
continuous time Markov chains through the $\Gamma$-convergence of the
large deviations rate functionals.

Next result is a simple consequence of the large deviations principle
stated in Theorem \ref{t01} and the $\Gamma$-convergence informally
described in the previous paragraphs.  (cf. Corollary 4.3 in
\cite{mar}).

\begin{corollary}\label{cor2}
Fix $0\le p\le \mf q$ and set $\theta^{(0)}_n=1$.
For every closed subset $C$ of $\ms P(V) \times E$ and every
open subset $G$ of $\ms P(V) \times E$,
\begin{gather*}
\limsup_{n\to \infty} \,  \limsup_{t\to\infty} \,
\frac{\theta^{(p)}_n}{t} \, \sup_{x\in V} \, \log \, \mb P^n_{\! x} \big[\, (L_t, Q_t) \,\in \,
C\,\big] \,\le\, -\, \inf_{(\mu, J) \in C} \, \mb I^{(p)} (\mu, J)\;,
\\
\liminf_{n\to \infty} \,  \liminf_{t\to\infty} \,
\frac{\theta^{(p)}_n}{t} \, \inf_{x\in V} \, \log \, \mb P^n_{\!x} \big[\, (L_t, Q_t) \,\in \,
G\,\big] \,\ge\, -\, \inf_{(\mu, J) \in G} \, \mb I^{(p)} (\mu, J)\;.
\end{gather*}
\end{corollary}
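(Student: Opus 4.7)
My plan is to combine, for each fixed $n$, the large deviations principle of Theorem \ref{t01} applied to $X^{(n)}$ with the $\Gamma$-expansion \eqref{14}, in the spirit of Corollary 4.3 of \cite{mar}. Fix $0\le p\le \mf q$ and recall $\theta^{(0)}_n = 1$.

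For the upper bound, I first apply Theorem \ref{t01} to $X^{(n)}$ and multiply by $\theta^{(p)}_n > 0$ to get
\begin{equation*}
\limsup_{t\to\infty}\, \sup_{x\in V}\, \frac{\theta^{(p)}_n}{t}\, \log\, \mb P^n_{\! x}\big[\, (L_t, Q_t) \in C\,\big] \,\le\, -\,\theta^{(p)}_n\, \inf_{(\mu, J) \in C}\, I_n(\mu, J)\;.
\end{equation*}
Taking $\limsup_{n\to\infty}$ on both sides, the claim reduces to
\begin{equation*}
\liminf_{n\to\infty}\, \inf_{(\mu,J)\in C}\, \theta^{(p)}_n\, I_n(\mu,J) \,\ge\, \inf_{(\mu, J)\in C}\, \mb I^{(p)}(\mu,J)\;.
\end{equation*}
To prove this, I pick near-minimizers $(\mu_n, J_n)\in C$, namely $\theta^{(p)}_n I_n(\mu_n, J_n) \le \inf_C \theta^{(p)}_n I_n + 1/n$. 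If the left-hand side above is $+\infty$ there is nothing to prove; otherwise, along a subsequence, $\theta^{(p)}_n I_n(\mu_n, J_n)$ stays bounded, and equi-coercivity of the family $\theta^{(p)}_n I_n$ forces $(\mu_n, J_n)$ to admit a convergent subsequence with limit $(\mu,J)$, which lies in $C$ since $C$ is closed. The $\Gamma$-liminf property of $\mb I^{(p)}$ then yields $\liminf_n \theta^{(p)}_n I_n(\mu_n, J_n) \ge \mb I^{(p)}(\mu,J) \ge \inf_C \mb I^{(p)}$.

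For the lower bound, fix an open set $G$ and $\epsilon>0$. Choose $(\mu,J)\in G$ with $\mb I^{(p)}(\mu,J) \le \inf_G \mb I^{(p)} + \epsilon$, and let $(\mu_n, J_n)\to (\mu,J)$ be a $\Gamma$-recovery sequence, so that $\limsup_n \theta^{(p)}_n I_n(\mu_n, J_n) \le \mb I^{(p)}(\mu,J)$. Since $G$ is open, $(\mu_n, J_n)\in G$ for all $n$ large, hence $\theta^{(p)}_n \inf_G I_n \le \theta^{(p)}_n I_n(\mu_n, J_n)$. Combining this with the large deviations lower bound of Theorem \ref{t01} applied to $X^{(n)}$ (multiplied by $\theta^{(p)}_n$, then with $\liminf_n$ outside) and letting $\epsilon\downarrow 0$ yields the claim.

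The main technical obstacle is the equi-coercivity of $\theta^{(p)}_n I_n$: a sequence with $\theta^{(p)}_n I_n(\mu_n, J_n)$ uniformly bounded must be precompact in $\ms P(V)\times \mf F_E$. Since $V$ is finite, $\ms P(V)$ is already compact, and this reduces to controlling the flows $J_n$; boundedness follows from the super-linear growth of $\Phi(\cdot,p)$ in its first argument together with hypothesis \eqref{mh}, and for $p\ge 1$ is reinforced by the fact that $\theta^{(p)}_n \to \infty$ forces $I_n(\mu_n, J_n)\to 0$ so that $(\mu_n, J_n)$ concentrates on the common zero set of the lower-order rate functionals $\mb I^{(0)}, \ldots, \mb I^{(p-1)}$. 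Verifying this cleanly, by exploiting the structure used to construct the $\mb I^{(p)}$ in the earlier sections of the paper, is where the real work lies; once it is in hand, the remainder of the argument above is standard.
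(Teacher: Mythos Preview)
The paper does not give a proof of this corollary at all: it simply states that the result ``is a simple consequence of the large deviations principle stated in Theorem~\ref{t01} and the $\Gamma$-convergence'' and points to Corollary~4.3 in \cite{mar}. Your proposal is precisely the standard argument behind that reference---combine the fixed-$n$ LDP with the $\Gamma$-liminf (plus equi-coercivity) for the upper bound and with a recovery sequence for the lower bound---so your approach is exactly the one the paper has in mind.

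One small correction: in your equi-coercivity sketch you invoke hypothesis~\eqref{mh}, but that comparability condition is not what is needed here. What you actually use is that $J_{\mu_n,R_n}(x,y)=\mu_n(x)R_n(x,y)$ is bounded in~$n$, which follows directly from \eqref{01} (the convergence $R_n(x,y)\to \bb R_0(x,y)\in[0,\infty)$) together with $\mu_n(x)\le 1$. The super-linear growth of $\Phi(\cdot,p)$ then forces $J_n(x,y)$ to be bounded whenever $\Phi(J_n(x,y),J_{\mu_n,R_n}(x,y))$ is, and since $V$ is finite this gives precompactness of $(\mu_n,J_n)$ in $\ms P(V)\times \mf F_E$. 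With that adjustment your outline is complete and correct.
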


The remainder of the article is organised as follows. In Section \ref{sec2}, we state the main result of this work, the $\Gamma$-convergence $\theta_n^{(p)} I_n \to {\bf I}^{(p)}$ for all $0 \le p \le \mf q$. In Section \ref{sec3} we prove the $p=0$ case; in Section \ref{sec4} we prove the remaining $1 \le p \le \mf q$ cases. In Section \ref{sec5}, we analyze the conditions under which the two large deviations rate functionals (cf. \eqref{02} and \eqref{60}) determine the Markov chain, including explicit counterexamples which demonstrate that the conditions are optimal.

\section{Notation and results}
\label{sec2}

In this section we state the main result of the article. This requires
some notation.  Denote by $\color{blue} \lambda_n(x)$, $x\in V$, the
holding rates of the Markov chain $X^{(n)}_t$ and by
$\color{blue} p_n(x,y)$, $x,y\in V$, the jump probabilities, so
that $R_n(x,y) = \lambda_n(x) \, p_n(x,y)$.  The generator reads
therefore as
\begin{equation*}
{ \color{blue} (\ms L_n f)(x)}
\,=\, \sum_{y\in V} R_n(x,y)\, \{
f(y)\,-\, f(x)\}\;, \quad f\colon V \to \bb R\;.
\end{equation*}

Let $E'$ be a proper subset of $E$. Denote by $\mf F_{E'}$ the set of
flows in $\mf F_E$ such that $J(x,y)=0$ for all
$(x,y) \in E\setminus E'$:
\begin{equation}
\label{15}
\mf F_{E'} \,:=\, \big\{ \, J\in \mf F_E:
J(x,y)=0 \quad \text{for all} \quad (x,y) \in E\setminus E' \,
\big\}\;.
\end{equation}
As before, denote by $\mf F^{\div}_{E'}$ the elements of
$\mf F_{E'}$ which are divergence-free.

\subsection*{The $\Gamma$-convergence}

Assume that $\lim_n R_n(x,y)$ exists for all $(x,y)\in E$ and denote
by $\bb R_{0} (x,y) \in [0,\infty)$ its limit:
\begin{equation}
\label{01}
{\color{blue} \bb R_{0} (x,y)}
\,:=\, \lim_{n \to \infty} \, R_n(x,y)\;, \quad (x,y) \,\in\, E \;. 
\end{equation}
Let $\bb E_0$ be the set of edges whose asymptotic rate is positive:
\begin{equation*}
{\color{blue} \bb E_0} \, := \, \{\, (x,y)\in E : \bb R_{0}
(x,y) \, > \, 0\,\} \; ,
\end{equation*}
and assume that $\bb E_0 \not = \varnothing$.  The jump
rates $\bb R_0 (x,y)$ induce a continuous-time Markov chain on $V$,
denoted by $\color{blue} (\bb X_t:t\ge 0)$, which, of course, may be
reducible. Denote by $\color{blue} \bb L^{(0)}$ its generator.

Denote by $\color{blue} \ms V_{1}, \dots, \ms V_{\mf n}$,
$\mf n\ge 1$, the closed irreducible classes of $\bb X_t$, and let
\begin{equation}
\label{05}
{\color{blue} S} \,:=\, \{1, \dots, \mf n\}\;, \quad 
{\color{blue} \ms V}  \,:=\, \bigcup_{j\in S} \ms V_{j} \;, \quad
{\color{blue} \Delta } \,:=\,  V  \, \setminus \, \ms V \;. 
\end{equation}
The set $\Delta$ may be empty and some of the sets $\ms V_j$ may be
singletons.

Let $\mb I^{(0)} \colon \ms P(V)\times \mf F_{E} \to [0,+\infty]$ be the
functional given by
\begin{equation}
\label{09}
\mb I^{(0)} (\mu,J) \,:=\,
\begin{cases}
\displaystyle{\Upsilon_{\bb E_0, \bb R_0} (\mu,J)} 
& J \,\in\, \mf F^{\div}_{\bb E_0} \;,
\\
\; +\infty  & \text{otherwise}\;,
\end{cases}
\end{equation}
which is the measure-current large deviations rate functional of
$\bb X_t$.

\begin{proposition}
\label{p01}
The functional $I_n$ $\Gamma$-converges to $\mb I^{(0)}$.
\end{proposition}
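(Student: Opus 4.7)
The plan is to verify the two conditions of $\Gamma$-convergence separately, leveraging two elementary features of $\Phi$: its joint lower semi-continuity on $[0,\infty)^2$ (with $\Phi(q,0)=+\infty$ for $q>0$), and the identity $\Phi(0,p)=p$, which makes the cost on edges carrying no flow transparent.

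For the $\Gamma$-liminf, I would fix $(\mu,J)$ and a sequence $(\mu_n,J_n)\to(\mu,J)$, pass to a subsequence realizing the liminf, and assume the values $I_n(\mu_n,J_n)$ are finite (otherwise there is nothing to prove). Each $J_n$ then lies in $\mf F^{\div}_E$; since $\mathop{\rm div}$ is a continuous linear map on $\mf F_E$, the limit $J$ is divergence-free as well. Edge-by-edge lower semi-continuity of $\Phi$ yields
\[
\liminf_n \Phi\bigl(J_n(x,y),\mu_n(x) R_n(x,y)\bigr) \,\ge\, \Phi\bigl(J(x,y),\mu(x)\bb R_0(x,y)\bigr)
\]
for every $(x,y)\in E$. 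If some $(x,y)\in E\setminus \bb E_0$ satisfies $J(x,y)>0$, the right-hand side equals $+\infty$ and we are done; otherwise $J\in \mf F^{\div}_{\bb E_0}$, and summing the edge-wise inequalities (keeping only the non-negative contributions of the edges in $E\setminus \bb E_0$) gives $\liminf_n I_n(\mu_n,J_n)\ge \mb I^{(0)}(\mu,J)$.

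For the $\Gamma$-limsup, the natural choice is the constant recovery sequence $(\mu_n,J_n)\equiv (\mu,J)$. One may assume $\mb I^{(0)}(\mu,J)<\infty$, so that $J\in\mf F^{\div}_{\bb E_0}\subset \mf F^{\div}_E$. Using $\Phi(0,p)=p$ on edges of $E\setminus \bb E_0$, we rewrite
\[
I_n(\mu,J)\,=\,\sum_{(x,y)\in \bb E_0}\Phi\bigl(J(x,y),\mu(x) R_n(x,y)\bigr) \;+\; \sum_{(x,y)\in E\setminus\bb E_0}\mu(x) R_n(x,y),
\]
and take $n\to\infty$. The second sum tends to $0$ by definition of $\bb E_0$. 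For the first, finiteness of $\mb I^{(0)}(\mu,J)$ forces $\mu(x)>0$ on every edge $(x,y)\in \bb E_0$ with $J(x,y)>0$, so each term lies in a continuity region of $\Phi$; the sum therefore converges to $\mb I^{(0)}(\mu,J)$.

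The only conceptual point worth flagging is the support constraint $J\in \mf F_{\bb E_0}$ built into the limiting functional. It is not a topological property of $\mf F_E$ but is enforced entirely by the $\Phi(q,0)=+\infty$ clause of the rate function, and it is precisely this clause that drives the $\Gamma$-liminf dichotomy above. Beyond this, no genuine obstacle is anticipated; the delicate analysis — coupling of metastable wells, construction of non-trivial recovery sequences on finer time scales — is reserved for the higher-order $\mb I^{(p)}$, $p\ge 1$.
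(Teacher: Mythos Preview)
Your proposal is correct and follows essentially the same approach as the paper: constant recovery sequence for the $\Gamma$-limsup, and edge-wise comparison for the $\Gamma$-liminf. The only cosmetic difference is that you invoke joint lower semi-continuity of $\Phi$ on $[0,\infty)^2$ as a single blanket fact, whereas the paper unfolds it into the explicit case analysis ($\mu(x)>0$; $\mu(x)=0$ with $J(x,y)=0$; $\mu(x)=0$ with $J(x,y)>0$) on edges of $\bb E_0$.
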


We prove Proposition \ref{p01} in Section \ref{sec3}.

\subsection*{The main assumption}

To examine the $\Gamma$-convergence of the measure-current large
deviations rate functionals at longer time-scales, we introduce a
natural hypothesis on the jump rates proposed in \cite{bl4} and
adopted in \cite{lx16, fk17, bgl-24, l-gamma}.

For two sequences of positive real numbers $(\alpha_n : n\ge 1)$,
$(\beta_n : n\ge 1)$, $\color{blue} \alpha_n \prec \beta_n$ or
$\color{blue} \beta_n \succ \alpha_n$ means that
$\lim_{n\to\infty} \alpha_n/\beta_n = 0$. Similarly,
$\color{blue} \alpha_n \preceq \beta_n$ or
$\color{blue} \beta_n \succeq \alpha_n$ indicates that either
$\alpha_n \prec \beta_n$ or $\alpha_n/\beta_n$ converges to a positive
real number $a\in (0,\infty)$.

Two sequences of positive real numbers $(\alpha_n : n\ge 1)$,
$(\beta_n : n\ge 1)$ are said to be \emph{comparable} if
$\alpha_n \prec \beta_n$, $\beta_n \prec \alpha_n$ or
$\alpha_n / \beta_n \to a \in (0,\infty)$. This condition excludes the
possibility that
$\liminf_n \alpha_n /\beta_n \neq \limsup_n \alpha_n /\beta_n$.

A set of sequences
$(\alpha^{\mf u}_n: n\ge 1)$, $\mf u \in \mf R$, of positive real
numbers, indexed by some finite set $\mf R$, is said to be comparable
if for all $\mf u,\mf v \in\mf R$ the sequences
$(\alpha^{\mf u}_n : n \ge 1)$, $(\alpha^{\mf v}_n : n \ge 1)$ are
comparable.

Recall that we denote by $E$ the set of directed edges (independent of
$n$) with positive jump rates.
Let $\bb Z_+ = \{0, 1, 2, \dots \}$, and $\color{blue} \Sigma_m$,
$m\ge 1$, be the set of functions $k\colon E \to \bb Z_+$ such that
$\sum_{(x,y)\in E} k(x,y) = m$.  We assume, hereafter, that for every
$m\ge 1$ the set of sequences
\begin{equation}
\label{mh}
\Big(\, \prod_{(x,y)\in E} R_n(x,y)^{k(x,y)} : n \ge 1 \,\Big)
\;,\quad k\,\in\,\Sigma_m \;,
\end{equation}
is comparable. In Remark \ref{rm1}, we comment on this assumption. 

\subsection*{Tree decomposition}

If the Markov chain $\bb X_t$ has only one closed irreducible class,
the $\Gamma$-expansion of
$I_n$ has only one term, $\mb I^{(0)}$. Indeed, in this case, by Lemma
\ref{l04}, $\mb I^{(0)}(\mu, J)=0$ implies that $\mu$ is the stationary
state of the Markov chain $\bb X_t$ and $J = J_{\mu, \bb R_0}$.
In particular, as $I_n$ converges to $\mb I^{(0)}$, for any sequence
$\beta_n\to\infty$, $\beta_n \,I_n (\mu_n, J_n) \to \infty$ for any
sequence $(\mu_n, J_n)$ converging to $(\mu, J) \neq (\pi, J_{\pi, \bb
R_0})$ if $\pi$ represents the stationary state of the Markov chain
$\bb X_t$. 

Assume therefore that there are more than one closed irreducible
class, in other words, that the constant $\mf n$ introduced in
\eqref{05} is larger than or equal to $2$: $\mf n\ge 2$.  Under this
assumption, \eqref{16}, and \eqref{mh}, \cite{bl7, lx16} constructed a
rooted tree which describes the behaviour of the Markov chain
$X^{(n)}_t$ at all different time-scales. We recall the construction
below.

The tree satisfies the following conditions:

\begin{itemize}
\item[(a)] Each vertex of the tree represents a subset of $V$;
\item[(b)] Each generation forms a partition of $V$;
\item[(c)] The children of each vertex form a partition of the parent;
\item[(d)] The generation $p+1$ is strictly coarser than the
generation $p$.
\end{itemize}

The tree is constructed by induction starting from the leaves to the
root.  It corresponds to a deterministic coalescence process. Denote
by $\color{blue} \mf q$ the number of steps in the recursive
construction of the tree. At each level $1\le p\le \mf q$, the
procedure generates a partition
$\{ \ms V^{(p)}_1, \dots, \ms V^{(p)}_{\mf n_p}, \Delta_p\}$, a
time-scale $\theta^{(p)}_n$ and a $\{1, \dots, \mf n_p\}$-valued
continuous-time Markov chain $\bb X^{(p)}_t$ which describes the
evolution of the chain $X^{(n)}_{t \theta^{(p)}_n}$ among the subsets
$\ms V^{(p)}_1, \dots, \ms V^{(p)}_{\mf n_p}$, called hereafter
\emph{wells}.

The leaves are the sets $\ms V_1, \dots, \ms V_{\mf n}, \Delta$
introduced in \eqref{05}. We proceed by induction. Let
$\color{blue} S_1 = S$, $\color{blue} \mf n_1 = \mf n$,
$\color{blue} \ms V^{(1)}_j = \ms V_j$, $j\in S_1$,
$\color{blue} \Delta_1 = \Delta$, and assume that the recursion has
produced the sets
$\ms V^{(p)}_1, \dots, \ms V^{(p)}_{\mf n_p}, \Delta_p$ for some
$p\ge 1$, which forms a partition of $V$.

Denote by $H_{\ms A}$, $H^+_{\ms A}$, ${\ms A}\subset V$, the hitting
and return time of ${\ms A}$:
\begin{equation} 
\label{201}
{\color{blue} H_{\ms A} } \,:=\,
\inf\, \big \{\,t>0 : X^{(n)}_t \in {\ms A}\, \big\}\;,
\quad
{\color{blue} H^+_{\ms A}} \,:=\,
\inf\, \big \{\,t>\tau_1 : X^{(n)}_t \in {\ms A}\, \big\}\; ,  
\end{equation}
where $\tau_1$ represents the time of the first jump of the chain
$X^{(n)}_t$:
$\color{blue} \tau_1 = \inf\,\{t>0 : X^{(n)}_t \not = X^{(n)}_0\}$.

For two non-empty, disjoint subsets $\ms A$, $\ms B$ of $V$, denote by
$\Cap_n(\ms A, \ms B)$ the capacity between $\ms A$ and $\ms B$:
\begin{equation}
\label{202}
{\color{blue} \Cap_n(\ms A, \ms B)}
\,:=\, \sum_{x\in \ms A} \pi_n(x)\, \lambda_n(x) 
\, \mb P^n_{\! x} \big[\, H_{\ms B} < H^+_{\ms A}\, \big]\;.
\end{equation}
Set $\color{blue} S_p = \{1, \dots, \mf n_p\}$, and let
$\theta^{(p)}_n$ be defined by
\begin{equation}
\label{26b}
{\color{blue} \frac 1{\theta^{(p)}_n}} \,:=\,
\sum_{i\in S_p}  \frac{\Cap_n (\ms V^{(p)}_i, \breve{\ms V}^{(p)}_i)}
{\pi_n(\ms V^{(p)}_i)}  \;, \quad
\text{where}\;\;
{\color{blue} \breve{\ms V}^{(p)}_i}  \,:=\, \bigcup_{j\in S_p \setminus\{ i\}} \ms
V^{(p)}_j\;. 
\end{equation}
By \cite[Assertion 8.B]{lx16},
\begin{equation}
\label{51}
\theta^{(p-1)}_n \;\prec\;  \theta^{(p)}_n \;.
\end{equation}

The ratio
$\pi_n(\ms V^{(p)}_i) /\Cap_n (\ms V^{(p)}_i, \breve{\ms V}^{(p)}_i)$
represents the time it takes for the chain $X^{(n)}_t$, starting from
a point in $\ms V^{(p)}_i$ to reach the set $\breve{\ms
V}^{(p)}_i$. Therefore, $\theta^{(p)}_n$ corresponds to the smallest
time needed to observe such a jump.


Let $\Psi_p\colon V\to S_p \cup \{0\}$ be the projection which sends
the points in $\ms V^{(p)}_j$ to $j$ and the elements of $\Delta_p$ to
$0$:
\begin{equation*}
{\color{blue} \Psi_p} \,:=\, \sum_{k\in S_p} k\; \chi_{_{\ms V^{(p)}_k}}\;.
\end{equation*}
In this formula and below, $\color{blue} \chi_{_{\ms A}}$ stands for
the indicator function of the set $\ms A$.  Next theorem follows from
the main result in \cite{lx16} and \cite{llm18}.

\begin{theorem}
\label{t1}
Assume that conditions \eqref{16}, \eqref{mh} are in force. Then, for
each $j\in S_p$, $x\in \ms V^{(p)}_j$, under the measure
$\mb P^n_{\! x}$, the finite-dimensional distributions of the sequence
of $(S_p \cup\{0\})$-valued processes
$\Psi_p (X^{(n)}_{t\theta^{(p)}_n})$ converge to the
finite-dimensional distributions of a $S_p$-valued Markov chain,
represented by $\bb X^{(p)}_t$.
\end{theorem}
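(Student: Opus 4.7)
The plan is to deduce the statement by combining two known results exactly as the statement advertises: the trace-process convergence of \cite{lx16} with the "negligible time in the transient set" estimate of \cite{llm18}. Under \eqref{16} and \eqref{mh}, \cite{lx16} constructs simultaneously the partition $\{\ms V^{(p)}_j\}_{j\in S_p}$, the time-scale $\theta^{(p)}_n$ given by \eqref{26b}, and an $S_p$-valued continuous-time Markov chain $\bb X^{(p)}_t$ whose jump rates are the asymptotic values of the rescaled inter-well capacities. Moreover, \cite{lx16} proves that, starting from $x\in \ms V^{(p)}_j$, the \emph{trace} of $X^{(n)}_{t\theta^{(p)}_n}$ on $\ms V^{(p)} = \bigcup_{k\in S_p} \ms V^{(p)}_k$, projected by $\Psi_p$, converges in the sense of finite-dimensional distributions to $\bb X^{(p)}_t$.

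First I would recall the structure of that trace-process argument, so the reader understands what is being cited. Given a test function $f\colon S_p\to\bb R$, one lifts it to $V$ via $\tilde f := f\circ\Psi_p$ and analyses $\ms L_n \tilde f$ in each well using the equilibrium potential between $\ms V^{(p)}_i$ and $\breve{\ms V}^{(p)}_i$; the normalization \eqref{26b} is exactly what is needed to turn the capacity estimates into a convergent generator for the trace chain. The inductive hypothesis from levels $1,\dots,p-1$, together with \eqref{51}, ensures that in each well the chain has already equilibrated to the invariant measure of the restricted process on the scale $\theta^{(p)}_n$, so that the resulting jump rates between wells are deterministic in the limit.

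Second, to pass from the trace to $\Psi_p(X^{(n)}_{t\theta^{(p)}_n})$ itself, one must bound the time the process spends in $\Delta_p$. This is where \cite{llm18} enters: it establishes that, for every $T>0$ and $\eta>0$,
\begin{equation*}
\lim_{n\to\infty} \sup_{x\in \ms V^{(p)}} \mb P^n_{\! x}\Big[\, \int_0^T \chi_{\Delta_p}\big(X^{(n)}_{s\theta^{(p)}_n}\big)\, \rmd s \,>\, \eta \,\Big] \,=\, 0\;,
\end{equation*}
so the projected trajectory differs from its trace on $\ms V^{(p)}$ on a set of Lebesgue measure tending to zero. Consequently, for any $0\le t_1<\cdots<t_m$, the events $\{\Psi_p(X^{(n)}_{t_i\theta^{(p)}_n})=j_i\}$ and the analogous events for the trace process coincide with probability tending to one, and the finite-dimensional distributions of $\Psi_p(X^{(n)}_{t\theta^{(p)}_n})$ share the limit $\bb X^{(p)}_t$.

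The main obstacle, and the reason that the statement cites two papers rather than one, is precisely this last negligibility: the raw chain $X^{(n)}_t$ certainly spends positive fraction of time in $\Delta_p$ on shorter time-scales, and showing that this fraction collapses on the scale $\theta^{(p)}_n$ requires the hierarchical capacity estimates carried by the tree construction. Once that ingredient is available in the form proved in \cite{llm18}, the proof reduces to a short soft argument combining trace convergence and the above time estimate.
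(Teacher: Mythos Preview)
The paper does not give a proof of this theorem at all: it merely records that the statement ``follows from the main result in \cite{lx16} and \cite{llm18}'', and your proposal is precisely a faithful unpacking of that citation, with \cite{lx16} supplying the trace-process convergence and \cite{llm18} upgrading it to convergence of finite-dimensional distributions via the negligibility of the time spent in $\Delta_p$. One small caveat: your sentence ``the events $\{\Psi_p(X^{(n)}_{t_i\theta^{(p)}_n})=j_i\}$ and the analogous events for the trace process coincide with probability tending to one'' is a bit too quick, since the time-change relating the full process to its trace shifts fixed times by a random amount and at a given $t_i$ the chain may sit in $\Delta_p$; handling this correctly is exactly the content of \cite{llm18}, so invoking that reference (rather than the displayed soft argument) is the right move.
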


The process $\bb X^{(p)}_t$ describes therefore how the chain
$X^{(n)}_{t}$ evolves among the wells $\ms V^{(p)}_j$ in the
time-scale $\theta^{(p)}_n$.  Note that the Markov chain
$\bb X^{(p)}_t$ takes value in $S_p$, while the process
$\Psi_p (X^{(n)}_{t\theta^{(p)}_n})$ may also be equal to $0$.

Denote by $\color{blue} r^{(p)}(j,k)$ the jump rates of the
$S_p$-valued continuous-time Markov chain
$\color{blue} (\bb X^{(p)}_t : t\ge 0)$.  By \cite[Theorem 2.7]{lx16},
there exist $j$, $k\in S_p$ such that $r^{(p)} (j,k) >0$. Actually, by
the proof of this result,
\begin{equation}
\label{36}
\text{$\displaystyle \sum_{k\not = j} r^{(p)} (j,k) >0$
for all $j\in S_p$ such that}\quad 
\lim_{n\to \infty} \theta^{(p)}_n \,
\frac{\Cap_n (\ms V^{(p)}_j, \breve{\ms V}^{(p)}_j)}
{\pi_n(\ms V^{(p)}_j)} >0 \;.
\end{equation}

Denote by
$\color{blue} \mf R^{(p)}_1, \dots, \mf R^{(p)}_{\mf n_{p+1}}$ the
recurrent classes of the $S_p$-valued chain $\bb X^{(p)}_t$, and by
$\color{blue} \mf T_p$ the transient states. Let
${\color{blue} \mf R^{(p)}} = \cup_j \mf R^{(p)}_j$, and observe that
$\{\mf R^{(p)}_1, \dots, \mf R^{(p)}_{\mf n_{p+1}}, \mf T_p\}$ forms a
partition of the set $S_p$. This partition of $S_p$ induces a new
partition of the set $V$. Let
\begin{equation*}
{\color{blue}  \ms V^{(p+1)}_m}
\,:=\, \bigcup_{j\in \mf R^{(p)}_m} \ms V^{(p)}_j\;, \quad
{\color{blue}  \ms T^{(p+1)}}
\,:=\, \bigcup_{j\in \mf T_p} \ms V^{(p)}_j\;, 
\quad m\in  {\color{blue} S_{p+1} \,:=\, \{1, \dots, \mf n_{p+1}\}} \;,
\end{equation*}
so that $V  \,=\,  \Delta_{p+1}   \, \cup \, \ms V^{(p+1)}$, where
\begin{equation}
\label{05b}
{\color{blue} \ms V^{(p+1)}}
\,=\, \bigcup_{m\in S_{p+1}} \ms V^{(p+1)}_{m}\;,
\quad
{\color{blue}  \Delta_{p+1}}
\,:=\, \Delta_p \,\cup\, \ms T^{(p+1)} \;.
\end{equation}

The subsets
$\ms V^{(p+1)}_1, \dots, \ms V^{(p+1)}_{\mf n_{p+1}}, \Delta_{p+1}$ of
$V$ are the result of the recursive procedure. We claim that
conditions (a)--(d) hold at step $p+1$ if they are fulfilled up to
step $p$ in the induction argument.

The sets
$\ms V^{(p+1)}_{1}, \dots, \ms V^{(p+1)}_{\mf n_{p+1}}$,
$\Delta_{p+1}$ constitute a partition of $V$ because the sets
$\mf R^{(p)}_1, \dots, \mf R^{(p)}_{\mf n_{p+1}}$, $\mf T_p$ form a
partition of $S_p$, and the sets
$\ms V^{(p)}_{1}, \dots, \ms V^{(p)}_{\mf n_{p}}$, $\Delta_{p}$ one of
$V$. Conditions (a)--(c) are therefore satisfied.

To show that the partition obtained at step $p+1$ is strictly coarser
than $\{\ms V^{(p)}_{1}, \dots,$
$\ms V^{(p)}_{\mf n_{p}}, \Delta_{p}\}$, observe that, by \eqref{36},
$r^{(p)}(j,k)>0$ for some $k\neq j \in S_p$. Hence, either $j$ is a
transient state for the process $\bb X^{(p)}_t$ or the closed
recurrent class which contains $j$ also contains $k$. In the first
case $\Delta_p \subsetneq \Delta_{p+1}$, and in the second one there
exists $m\in S_{p+1}$ such that
$ \ms V^{(p)}_{j} \cup \ms V^{(p)}_{k} \subset \ms V^{(p+1)}_{m}$.
Therefore, the new partition
$\{\ms V^{(p+1)}_{1}, \dots, \ms V^{(p+1)}_{\mf n_{p+1}},
\Delta_{p+1}\}$ of $V$ satisfies the condition (d).

The construction terminates when the $S_p$-valued Markov chain
$\bb X^{(p)}_t$ has only one recurrent class so that $\mf
n_{p+1}=1$. In this situation, the partition at step $p+1$ is
$\ms V^{(p+1)}_{1}$, $\Delta_{p+1}$.

This completes the construction of the rooted tree.  Recall that we
denote by $\color{blue} \mf q$ the number of steps of the scheme.  As
claimed at the beginning of the procedure, for each
$1\le p\le \mf q$, we generated a time-scale $\theta^{(p)}_n$, a
partition
$\ms P_p = \{\ms V^{(p)}_{1}, \dots, \ms V^{(p)}_{\mf n_{p}},
\Delta_{p}\}$, where
$\ms P_1 = \{\ms V_{1}, \dots, \ms V_{\mf n}, \Delta \}$,
$\ms P_{\mf q+1} = \{\ms V^{(\mf q+1)}_{1}, \Delta_{\mf q+1} \}$, and
a $S_p$-valued continuous-time Markov chain $\bb X^{(p)}_t$.

The partitions $\ms P_1, \dots, \ms P_{\mf q+1}$ form a rooted tree
whose root ($0$-th generation) is $V$, first generation is
$\{\ms V^{(\mf q+1)}_{1}, \Delta_{\mf q+1} \}$ and last
($(\mf q+1)$-th) generation is
$\{\ms V_{1}, \dots, \ms V_{\mf n}, \Delta \}$.  Note that the set
$\ms V^{(p+1)}$ corresponds to the set of recurrent points for the
chain $\bb X^{(p)}_t$. In contrast, the points in $\Delta_{p+1}$ are
either transient for this chain or negligible in the sense that the
chain $X^{(n)}_t$ remains a negligible amount of time on the set
$\Delta_p$ in the time-scale $\theta^{(p)}_n$ (cf. \cite{bl4, lx16}).

\subsection*{A set of measures}

We construct in this subsection a set of probability measures
$\pi^{(p)}_j$, $1\le p\le \mf q +1$, $j\in S_p$, on $V$ which
describe the evolution of the chain $X^{(n)}_t$ and such that
\begin{equation}
\label{o-52}
\text{ the support of $\pi^{(p)}_j$ is the set $\ms V^{(p)}_j$}\; .
\end{equation}

We proceed by induction. Let $\color{blue} \pi^{(1)}_j$, $j\in S_{1}$, be the
stationary states of the Markov chain $\bb X_t$ restricted to the
closed irreducible classes $\ms V^{(1)}_j = \ms V_j$. Clearly, condition
\eqref{o-52} is fulfilled.

Fix $1\le p\le \mf q$, and assume that the probability measures
$\pi^{(p)}_j$, $j\in S_p$, have been defined and satisfy condition
\eqref{o-52}. Denote by $\color{blue} M^{(p)}_m(\cdot)$,
$m\in S_{p+1}$, the stationary state of the Markov chain
$\bb X^{(p)}_t$ restricted to the closed irreducible class
$\mf R^{(p)}_m$. The measure $ M^{(p)}_m$ is understood as a measure
on $S_p=\{1, \dots, \mf n_p\}$ which vanishes on the complement of
$\mf R^{(p)}_m$.  Let $\pi^{(p+1)}_m$ be the probability measure on
$V$ given by
\begin{equation}
\label{o-80}
{\color{blue} \pi^{(p+1)}_m (x)}
\,:=\, \sum_{j\in \mf R^{(p)}_m} M^{(p)}_m(j)\, \pi^{(p)}_j (x)\;, \quad
x\in V\;.
\end{equation}

Clearly, condition \eqref{o-52} holds, and the measure
$\pi^{(p+1)}_m$, $1\le p\le \mf q$, $m\in S_{p+1}$, is a convex
combination of the measures $\pi^{(p)}_j$, $j\in \mf R^{(p)}_m$.
Moreover, by \cite[Theorem 3.1 and Proposition 3.2]{bgl-24}, for all
$z\in \ms V^{(p)}_j$,
\begin{equation}
\label{o-58}
\lim_{n\to\infty} \frac{\pi_n(z)}{\pi_n(\ms V^{(p)}_j)}
\,=\, \pi^{(p)}_j(z) \,\in\, (0,1] \;, \quad
\lim_{n\to \infty} \pi_n(\Delta_{\mf q+1}) \,=\, 0
\;.
\end{equation}

By \eqref{o-80}, the measures $\pi^{(p)}_j$, $2\le p\le \mf q+1$,
$j\in S_p$, are convex combinations of the measures $\pi^{(1)}_k$,
$k\in S_1$. By \eqref{o-58}, for all $x\,\in\, \ms V^{(\mf q+1)}$,
$\lim_{n\to\infty} \pi_n(x)$ exists and belongs to $(0,1]$. By
\eqref{o-58}, and since by (1.c) $\Delta_p \subset \Delta_{p+1}$ for
$1\le p\le \mf q$, $\lim_{n\to \infty} \pi_n(\Delta_{p}) \,=\, 0$ for
all $p$.

\subsection*{The $\Gamma$-expansion}

We are now in a position to state the main result of this article.
Let $\color{blue} \bb L^{(p)}$, $1\le p\le \mf q$, be the generator of
the $S_p$-valued Markov chain $\bb X^{(p)}_t$.  Denote by
$\color{blue} \ms P(S_p)$, $1\le p\le \mf q$, the set of probability
measures on $S_p$. Let
$\bb I^{(p)} \colon \ms P (S_p) \to [0,+\infty]$ be the
Donsker-Varadhan large deviations rate functional of $\bb X^{(p)}_t$
given by
\begin{equation}
\label{40}
{\color{blue} \bb I^{(p)} (\omega) } \,:=\,
\sup_{\mb h} \,-\,  \sum_{j\in S_p} \omega_j \,
e^{- \mb h (j) } \, (\bb L^{(p)} e^{ \mb h})(j)  \;,
\end{equation}
where the supremum is carried over all functions
$\mb h:S_p \to \bb R$.  Denote by
$\mb I^{(p)} \colon \ms P(V) \times \mf F_E \to [0,+\infty]$ the
functional given by
\begin{equation}
\label{o-83b}
{\color{blue} \mb I^{(p)} (\mu, J) } \,:=\,
\left\{
\begin{aligned}
& \bb I^{(p)} (\omega)   \quad \text{if}\;\;
\mu = \sum_{j\in S_p} \omega_j \, \pi^{(p)}_j \;\; \text{for}\;\;
\omega \in \ms P (S_p)\;\; \text{and}\;\; J = J_{\mu, \bb R_0} \;,  \\
& +\infty \quad\text{otherwise}\;.
\end{aligned}
\right.
\end{equation}

The main result of the article reads as follows.

\begin{theorem}
\label{mt1}
For each $1\le p\le \mf q$, the functional $\theta^{(p)}_nI_n$
$\Gamma$-converges to $\mb I^{(p)}$.
\end{theorem}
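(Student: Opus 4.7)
The plan is to prove the $\Gamma$-liminf and $\Gamma$-limsup bounds separately, exploiting the hierarchical tree structure of Section~\ref{sec2} and the variational representations of both $I_n$ and $\bb I^{(p)}$.

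For the $\Gamma$-liminf, let $(\mu_n,J_n)\to(\mu,J)$ and assume $\liminf_n \theta^{(p)}_n I_n(\mu_n,J_n)<\infty$, otherwise there is nothing to prove. Since $\theta^{(p)}_n\to\infty$, this already forces $I_n(\mu_n,J_n)\to 0$, so by Proposition~\ref{p01} and Lemma~\ref{l04} the flow $J$ is divergence-free with $J=J_{\mu,\bb R_0}$, and $\mu = \sum_{j\in S_1}\omega^{(1)}_j\,\pi^{(1)}_j$ for some $\omega^{(1)}\in\ms P(S_1)$. The first decisive step is a \emph{structural rigidity} claim: the control provided by $\theta^{(p)}_n I_n$ actually forces $\mu$ into the stronger form $\mu = \sum_{j\in S_p}\omega_j\,\pi^{(p)}_j$ with $\omega\in\ms P(S_p)$, reflecting that, at time-scale $\theta^{(p)}_n$, the chain has equilibrated across all wells merged at faster levels. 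I would prove this by induction on $p$, combining the convex combination identity \eqref{o-80} with the strict time-scale separation \eqref{51} and the asymptotics \eqref{o-58}.

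Having pinned down the shape of $\mu$, the bound $\bb I^{(p)}(\omega)\leq\liminf_n \theta^{(p)}_n I_n(\mu_n,J_n)$ will follow from the Donsker--Varadhan-type inequality
\begin{equation*}
I_n(\mu_n,J_n) \;\geq\; -\,\sum_{x\in V}\mu_n(x)\,\frac{(\ms L_n g)(x)}{g(x)},
\qquad g\colon V\to(0,\infty),
\end{equation*}
obtained by substituting $F(x,y)=\log g(y)-\log g(x)$ in the Legendre bound for $\Phi$ and using that $J_n$ is divergence-free. Given a test function $\mb h\colon S_p\to\bb R$, I would lift it to $\tilde{\mb h}\colon V\to\bb R$ equal to $\mb h(j)$ on each well $\ms V^{(p)}_j$, extending on $\Delta_p$ via the $n$-dependent harmonic extension for $\ms L_n$ so as to mimic the trace generator at level $p$. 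Setting $g=e^{\tilde{\mb h}}$ kills the contribution of within-well edges, so upon multiplying by $\theta^{(p)}_n$ only the inter-well contribution survives; this contribution converges to $-\sum_j \omega_j e^{-\mb h(j)}(\bb L^{(p)} e^{\mb h})(j)$ by Theorem~\ref{t1} and the definition \eqref{26b} of $\theta^{(p)}_n$. Taking the supremum over $\mb h$ and invoking \eqref{40} recovers $\bb I^{(p)}(\omega)$.

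For the $\Gamma$-limsup, fix $(\mu,J)$ with $\mb I^{(p)}(\mu,J)<\infty$, so $\mu=\sum_j\omega_j\,\pi^{(p)}_j$ and $J=J_{\mu,\bb R_0}$. The recovery sequence will be constructed from the dual flow representation
\begin{equation*}
\bb I^{(p)}(\omega) \;=\; \inf_{\mc J\in\mf F^{\div}} \sum_{(j,k)}\Phi\bigl(\mc J(j,k),\,\omega_j\,r^{(p)}(j,k)\bigr),
\end{equation*}
the infimum being over divergence-free flows on $S_p\times S_p$; let $\mc J^\star$ be an almost-minimiser. I would set $\mu_n(x)=\omega_j\,\pi_n(x)/\pi_n(\ms V^{(p)}_j)$ for $x\in\ms V^{(p)}_j$ (with a vanishing correction on $\Delta_{\mf q+1}$), so that $\mu_n\to\mu$ by \eqref{o-58}. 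For $J_n$, I would prescribe an inter-well current of order $\mc J^\star(j,k)/\theta^{(p)}_n$ along distinguished edges or paths connecting the wells, then close it inside each well by the equilibrium flow $\pi_n(x)R_n(x,y)$ plus a divergence-cancelling correction of lower order. A Taylor expansion of \eqref{Phi} together with the rate asymptotics then yields $\theta^{(p)}_n I_n(\mu_n,J_n)\to\bb I^{(p)}(\omega)$.

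The principal obstacle is the structural rigidity step in the $\Gamma$-liminf: showing that a bound on $\theta^{(p)}_n I_n(\mu_n,J_n)$ pins $\mu$ in the convex hull of $\{\pi^{(p)}_j\}_{j\in S_p}$, not merely that of the finer partition $\{\pi^{(1)}_j\}_{j\in S_1}$. This requires a delicate inductive comparison across time-scales and is where the hypothesis~\eqref{mh} on the comparability of products of jump rates plays a crucial role. A secondary difficulty is the handling of the transient set $\Delta_p$, both in the harmonic extension of test functions (liminf) and in the flow routing across $\Delta_p$ (limsup); the formulas \eqref{o-80} and \eqref{o-58} are the main tools for these bookkeeping estimates.
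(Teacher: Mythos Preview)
Your approach is genuinely different from the paper's and, as written, leaves the hardest step unproven. The paper does \emph{not} attack the level-$2.5$ $\Gamma$-convergence from scratch: it reduces to the already-established level-$2$ result $\theta^{(p)}_n\ms I_n \xrightarrow{\Gamma} \bb I^{(p)}$ from \cite{l-gamma}. For the $\Gamma$-limsup, the paper takes the level-$2$ recovery sequence $\nu_n$ for $\mu$, sets $J^*_n$ equal to the unique optimal flow realising $\ms I_n(\nu_n)=I_n(\nu_n,J^*_n)$ (the explicit tilted flow of \cite{bfg}), and checks via a short computation that $J^*_n\to J_{\mu,\bb R_0}$. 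For the $\Gamma$-liminf, the paper splits into two cases: if $\mb I^{(p)}(\mu,J)<\infty$ one simply uses $I_n(\mu_n,J_n)\ge \ms I_n(\mu_n)$ together with the level-$2$ $\Gamma$-liminf; if $\mb I^{(p)}(\mu,J)=\infty$ one invokes a one-line hierarchy lemma ($\mb I^{(p)}<\infty \Leftrightarrow \mb I^{(p-1)}=0$), the induction hypothesis at level $p-1$, and $\theta^{(p-1)}_n\prec\theta^{(p)}_n$. No explicit recovery flow and no ``structural rigidity'' argument at level $2.5$ are needed.

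Your plan instead re-derives the content of \cite{l-gamma} inside the proof. The step you flag as the ``principal obstacle'' --- that $\liminf_n\theta^{(p)}_n I_n(\mu_n,J_n)<\infty$ forces $\mu\in\mathrm{conv}\{\pi^{(p)}_j\}$ --- is precisely the level-$2$ $\Gamma$-liminf of \cite{l-gamma}, and your sketch (``induction on $p$ using \eqref{o-80}, \eqref{51}, \eqref{o-58}'') does not supply an argument. More concretely, your test-function computation has a gap: for an \emph{arbitrary} sequence $\mu_n\to\mu$, the quantity $\theta^{(p)}_n\sum_{x\in\ms V^{(p)}_j}\mu_n(x)\sum_{y\notin\ms V^{(p)}_j}R_n(x,y)\big(e^{\tilde{\mb h}(y)-\mb h(j)}-1\big)$ need not converge to $\omega_j\sum_k r^{(p)}(j,k)(e^{\mb h(k)-\mb h(j)}-1)$, because the escape rates are typically concentrated on a few boundary vertices of $\ms V^{(p)}_j$ and nothing prevents $\mu_n$ from putting the wrong mass there; Theorem~\ref{t1} concerns the law of the process started from a fixed point, not arbitrary measure sequences, so it does not justify this limit. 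Your $\Gamma$-limsup construction is plausible but far more laborious than the paper's optimal-flow trick. In short: the route can be made to work, but only by essentially reproving \cite{l-gamma}; the paper's reduction is both shorter and avoids the delicate points you identify.
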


We complete this section with some comments. 

\begin{remark}
\label{rm2}
It follows from the previous result that it is too costly to modify
the current. More precisely, fix a pair $(\mu, J)$ of measure and
current.  The cost (that is the value of the large deviations rate
functional) of the pair $(\mu, J)$ for a current $J$ different from
the one induced by the measure $\mu$ (that is $J_{\mu, \bb R_0}$) is
finite only on the initial scale. On all the other ones it is infinite.
For this reason, in the time-scales $\theta^{(p)}_n$, $p \ge 1$, in the
proof of the upper bound we may restrict the analysis to the optimal
current $J_n^*$ associated to the recovery sequence of measures $\nu_n$
of $\mu$ constructed in \cite{l-gamma}. Since the measure-current
large deviations rate functional computed at the optimal current is
equal to the Donsker-Varadhan large deviations rate functional of the
measure (see equation \eqref{60}), the $\Gamma$-convergence of the
pairs measure-current is reduced to the $\Gamma$-convergence of the
measures.
\end{remark}

\begin{remark}
\label{rm3}
It follows from \eqref{02c} that
\begin{equation}
\label{61}
\theta\, \Upsilon_{E,R} (\mu,J) \,=\,
\Upsilon_{E, \theta  R} (\mu, \theta J) \;.
\end{equation}
Since multiplying the jump rates by a constant corresponds to speeding
up the dynamics by the same amount, by considering the $\Gamma$-limit of
$\theta_n\, I_n$, we are actually examining the asymptotic behaviour
of the large deviations rate functional of the process on the longer
time-scale $\theta_n$.
\end{remark}

In view of equation \eqref{61}, one might be tempted to investigate
the limit of $\Upsilon_{E, \theta R} (\mu, J)$, where we only
accelerate the process and not the currents. However, by speeding-up
the dynamics, we also amplify the typical flows.

Indeed, since $\Phi(q,p) = p \, \phi (q/p)$ for $p$,
$q\in (0,\infty)$, where $\phi(x) = x\log x +1 -x$, for bonds $(x,y)$
such that $\theta_n \, \mu_n (x) \, R_n(x,y) \to \infty$, for
$\Phi(J_n(x,y), \theta_n \, \mu_n (x) \, R_n(x,y))$ to converge to a finite
real number, $J_n(x,y)$ must diverge (more precisely, the ratio
$J_n(x,y)/ ( \theta_n \, \mu_n (x) \, R_n(x,y) )$ must converge to $1$).  Thus,
by investigating the asymptotic behavior of
$\Upsilon_{E, \theta R} (\mu, J)$, one would need to consider flows of
the order $\theta_n \, \mu_n (x) \, R_n(x,y)$, which is exactly what is
considered in the asymptotic analysis of
$\Upsilon_{E, \theta R} (\mu, \theta J)$.

\begin{remark}
\label{rm5}
The metastable time-scales are the time-scales at which one observes a
modification in the structure of the empirical measures. There might
exist intermediate scales, between metastable time-scales, at which
one observes a modification in the structure of the flows. These
intermediate time-scales are not captured by the analysis carried out
in this article.

To illustrate the above assertion, consider the reversible Markov
chain $X^{(n)}_t$ taking values in $E=\{-3, \dots, 3\}$ with only
nearest-neighbour jumps whose rates are given by
$R_n(-2,-1) = R_n(-1,0) = R_n(2,1) = R_n(1,0) =1/n$, all the other
ones being equal to $1$.  The stationary state, denoted by $\pi_n$, is
the measure $\pi_n(-3) =\pi_n(-2) =\pi_n(2) =\pi_n(3) = a_n$,
$\pi_n(-1) = \pi_n(-1) = a_n/n$, $\pi_n(0) = a_n/n^2$, where $a_n$ is
a normalising constant.

In this example, as jumps are nearest neighbour, the divergence free
flows are symmetric. Besides the initial time-scale, $n^2$ is the only
other metastabe time scale. In this time-scale jumps between the wells
$\{-3, -2\}$ and $\{2,3\}$ are observed.

We turn to the flows.  In times of order $1$, flows between sites
$-3$, $-2$ and between sites $3$, $2$ are observed. As the jump rate
from $-2$ to $-1$ is $1/n$ no flow is observed between $-2$ and $-1$,
and, for the same reasons, between $-1$ and $0$, between $0$ and $1$, or between $1$
and $2$.  This analysis is confirmed by Proposition \ref{p01}, which,
applied to this example, states that the measure-current large
deviations rate functional $I_n$, introduced in \eqref{62},
$\Gamma$-converges to the rate functional $\mb I^{(0)}$ associated to
the chain, denoted by $\mb X^{(0)}_t$, obtained from $X^{(n)}_t$ by
setting to $0$ all jump rates equal to $1/n$. The only flows with
finite cost are precisely those between sites $-3$, $-2$ and between
$3$, $2$. All the other ones have infinite cost.

At time of order $n$, one observes a flow of order $1$ between $-2$
and $-1$, and a flow of order $n$ between $-3$ and $-2$. That is, in
any time interval $[0,t]$, $t>0$, with positive probability, there is
at least one jump from $-2$ to $-1$ followed by one from $-1$ to
$-2$.  In this example, as claimed in the remark, there is an
intermediate time-scale, $n$ as argued above, at which the structure
of the flows changes, while the empirical measures do not.
\end{remark}

\begin{remark}
\label{rm1}
The hypothesis \eqref{mh} on the jump rates is taken from \cite{bl4} and
\cite{lx16}. It is a natural condition in the investigation of the
asymptotic behavior of sequences of Markov chains.  Indeed, a first
reasonable hypothesis to impose consists in assuming that the jump
rates converge, as stated in \eqref{01}.

In longer time-scales, the process is expected to remain long times in
a set of wells and to perform very short excursions among the points
which separate the wells. It is therefore also natural to require the
jump rates of the trace process to converge. (We refer to
\cite[Section 6]{bl2}, \cite{lrev} for the definition of the trace
process.)

Fix $V_0\subset V$, and denote by $R^{V_0}_n$ the jump rates of the
trace in $V\setminus V_0$ of the Markov chain $X^{(n)}_t$.  According
to the displayed equation after Corollary 6.2 in \cite{bl2}, $R^{V_0}_n$
can be expressed as a sum of products of terms of the form
\begin{equation}
\label{17}
\frac{R_n(x_0, x_1) \cdots R_n(x_{m-1}, x_m)}
{\sum_{b=1}^q R_n(y^b_0, y^b_1) \cdots R_n(y^b_{m-1}, y^b_m)}
\end{equation}
for some $m\ge 1$.  Here $x_i$, $y^b_j\in V$. Mind that the number
of terms in each product, $m$, is always the same.

Condition \eqref{mh} is precisely the one needed to guarantee that
such expressions have a limit (which might be $+\infty$) and do not
oscillate.

Finally, as observed in \cite{bl4} (see Remark 2.2 in \cite{bgl-24}),
assumption \eqref{mh} is fulfilled by all statistical mechanics models
which evolve on a fixed finite state space and whose metastable
behaviour has been derived. This includes the Ising model \cite{ns91,
ns92, bc96, bm02}, the Potts model with or without a small external
field \cite{nz19, ks22, ks24}, the Blume-Capel model \cite{co96, ll16}, and
conservative Kawasaki dynamics \cite{bhn06, GHNO09, HNT12, bl15b}.
\end{remark}

\section{The initial time-scale}
\label{sec3}

In this section, we investigate the initial, $0$-th, time-scale. In detail, we prove
Proposition \ref{p01} and identify the set of zeros of the functional $\mb I^{(0)}$.

A flow $J\colon E \to \bb R_+$ is called a \emph{cycle} if there
exists a set of distinct edges $(x_0, x_1), \dots, (x_{n-1}, x_n)$ in
$E$ and a constant $a\not = 0$ such that $x_n=x_0$ and
\begin{equation*}
J(x,y) \,=\,
\begin{cases}
a & \text{if $(x,y) = (x_j, x_{j+1})$ for some $0\le j<n$}\;,
\\
0 & \text{otherwise}\;.
\end{cases}
\end{equation*}
Clearly, every cycle is divergence-free and every divergence-free flow
can be expressed as a finite sum of cycles. To prove the latter argument,
it suffices to observe the following. Given any $J \in \mf F_E^{\div}$,
take $(x,y)\in E$ such that
\begin{equation}
\label{cyc}
J(x,y) \,  = \, \min \big \{ \, J(z,w) : (z,w) \in E , \;\; J(z,w)>0 \, \big \} \; ,
\end{equation}
and find a set of distinct edges $(x_0,x_1) , \dots , (x_{n-1},x_n)$ with
$(x_0,x_1) = (x,y)$ and $x_n=x_0$ such that $J(x_j,x_{j+1})>0$ for each $0 \le j < n$,
which is possible since $J$ is divergence-free. Define a cycle $J_0 \colon E \to \bb R_+$
which has value $J(x,y)$ along the $n$ edges described above. Then, by the minimality in \eqref{cyc},
$J-J_0 \colon E \to \bb R_+$ is again a divergence-free flow which has strictly less number of edges with
a positive value. Iterating this procedure, which ends in finite steps, we obtain the
desired decomposition as a sum of cycles.


\begin{proof}[Proof of Proposition \ref{p01}]
We first consider the $\Gamma$-limsup. Fix a pair $(\mu,J)$ in
$\ms P(V)\times \mf F_{E}$. We may assume that
$\mb I^{(0)} (\mu,J) < \infty$, otherwise there is nothing to
prove. Under this restriction, $J$ belongs to
$\mf F^{\div}_{\bb E_0}$, thus to $\mf F^{\div}_{E}$.

Let $(\mu_n,J_n)$ be the sequence constantly equal to $(\mu,J)$.  Since
$J$ belongs to $\mf F^{\div}_{E}$, by
\eqref{02c} and \eqref{02},
\begin{equation*}
I_n (\mu,J) \,=\, \Upsilon_{E, R_n} (\mu,J)
\,=\, \sum_{(x,y)\in E} \Phi \big(\, J(x,y) \,,\, J_{\mu,R_n}(x,y) \,\big) \;.
\end{equation*}
It remains to show that
\begin{equation}
\label{04b}
\lim_{n\to\infty} \Phi \big(\, J(x,y) \,,\, J_{\mu,R_n}(x,y) \,\big)
\,=\, \Phi \big(\, J(x,y) \,,\, J_{\mu,\bb R_0}(x,y) \,\big ) \quad
\text{for all $(x,y)\in E$}\; .
\end{equation}

Suppose that $J(x,y)=0$. By \eqref{Phi}, $\Phi(0, \cdot)$ is continuous. Thus, by
\eqref{01} and the definition \eqref{19} of the flow
$J_{\mu,R_n}(x,y) $, \eqref{04b} holds for edges $(x,y)\in E$ such that
$J(x,y)=0$. On the other hand, if $J(x,y)>0$, since $\mb I^{(0)} (\mu,J) < \infty$, by
\eqref{Phi}, $\mu(x)\, \bb R_0(x,y)  = J_{\mu,\bb R_0}  (x,y) >
0$. Thus, by \eqref{01}, \eqref{04b} also holds in this case. This
completes the proof of the $\Gamma$-limsup. Note that we proved that
\begin{equation*}
\lim_{n\to \infty} I_n (\mu,J) \,=\, \mb I^{(0)} (\mu,J)\;.
\end{equation*}

We turn to the $\Gamma$-liminf. Fix a pair $(\mu,J)$ in
$\ms P(V)\times \mf F_{E}$ and a sequence $(\mu_n,J_n)$ in
$\ms P(V)\times \mf F_{E}$ converging to $(\mu,J)$.  If $J$ is not
divergence-free, for $n$ sufficiently large $J_n$ is not
divergence-free as well and for those $n$'s
$I_n (\mu_n,J_n) \, = \, \mb I^{(0)} (\mu,J) \, = \, \infty$.

Assume that $J$ is divergence-free and that $J(x,y)>0$ for some
$(x,y) \not \in \bb E_0$. In this case $\mb I^{(0)}(\mu,J) = \infty$
(because $J_{\mu,\bb R_0} (x,y) = \mu(x) \, \bb R_0(x,y) = 0$ so that,
in view of \eqref{Phi},
$\Phi (\, J(x,y) \, , \, J_{\mu,\bb R_0} (x,y) \, ) = +\infty$). On the other
hand, since $\Phi$ is positive, $ J_n(x,y) \to J(x,y)>0$ and
$J_{\mu_n,R_n}(x,y) \le R_{n}(x,y) \to 0$,
\begin{equation*}
\liminf_{n\to\infty} I_n (\mu_n,J_n) \;\ge \;
\liminf_{n\to\infty} \Phi \big(\, J_n(x,y) \,,\, J_{\mu_n,R_n}(x,y)
\,\big) \,=\, \infty \,=\, \mb I^{(0)}(\mu,J)  \;.
\end{equation*}

The previous arguments show that we may restrict our attention to
divergence-free flows that vanish on edges which do not belong to
$\bb E_0$, that is, to flows in $\mf F^{\div}_{\bb E_0}$. Assume
that $J$ belongs to this set. Since $\Phi$ is positive,
\begin{equation*}
\liminf_{n\to\infty} I_n (\mu_n,J_n) \;\ge \;
\liminf_{n\to\infty}
\sum_{(x,y)\in \bb E_0} \Phi \big(\, J_n(x,y) \,,\,
J_{\mu_n,R_n}(x,y) \,\big) \;. 
\end{equation*}
Fix $(x,y)\in \bb E_0$. We consider three cases. If $\mu(x)>0$,
$\Phi (\, J_n(x,y) \,,\, J_{\mu_n,R_n}(x,y) \,)$ converges to
$\Phi (\, J(x,y) \,,\, J_{\mu,\bb R_0}(x,y) \,)$. If $\mu(x)=0$ and
$J(x,y)=0$, then, as $\Phi$ is positive,
\begin{equation*}
\liminf_{n\to\infty} \Phi \big(\, J_n(x,y) \,,\, J_{\mu_n,R_n}(x,y) \, \big )
\,\ge\, 0 \, = \, \Phi \big(\, J(x,y) \,,\, J_{\mu,\bb R_0} (x,y) \, \big)\;.
\end{equation*}
Finally, if $\mu(x)=0$ and $J(x,y)>0$, then
$\liminf_{n\to\infty} \Phi (\, J_n(x,y) \,,\, J_{\mu_n,R_n}(x,y) \,)
\,=\, \infty \,=\, \Phi (\, J(x,y) \,,\, J_{\mu,\bb R_0} (x,y) \,)$.
This completes the proof of the proposition.
\end{proof}

\subsection*{The functional $\mb I^{(0)}$}

The set of divergence-free flows in $\mf F_{\bb E_0}^{\div}$ has a simple
structure.  Denote by $\bb E_{0,j}$, $1\le j\le \mf n$, the set of
directed edges in $\bb E_0$ whose both endpoints belong to $\ms V_j$:
\begin{equation*}
\bb E_{0,j} \,:=\, \big \{\, (x,y) \in \bb E_0 : x\,,\,y \,\in\, \ms
V_j\,\big\}\;. 
\end{equation*}
We say that $y$ is equivalent to $x$, $\color{blue} y \sim x$ if $y=x$ or if there exist sequences $x=x_0,\dots,x_\ell=y$ and $y=y_0,\dots,y_m=x$ such that $\bb{R}_0(x_i,x_{i+1})>0$, $\bb{R}_0(y_j,y_{j+1})>0$ for all $0\le i < \ell$, $0\le j < m$. Denote by $\color{blue} \ms C_1,\dots,\ms C_{\mf m}$ the equivalent classes which are not one of the sets $\ms V_j$, $1\le j \le \mf n$. Then, denote by $\color{blue} \bb E_{0,k}^{\rm{Tr}}$, $1\le k\le \mf m$, the set of directed edges in $\bb E_0$ whose both endpoints belong to $\ms C_k$:
\begin{equation*}
\bb E_{0,k}^{\rm Tr} \,:=\, \big \{\, (x,y) \in \bb E_0 : x\,,\,y \,\in\, \ms
C_k\,\big\}\;. 
\end{equation*}

We claim that any $\bb E_0$-divergence-free flow is a sum of
$\bb E_{0,j}$- or $\bb E_{0,k}^{\rm Tr}$-divergence-free flows (that is, flows whose edges belong
to $\bb E_{0,j}$ or $\bb E_{0,k}^{\rm Tr}$, respectively).
To see this, suppose the contrary that $J(a_1,a_2) > 0$ for some $(a_1,a_2) \in \bb E_0$ which does not belong to any $\bb E_{0,j}$ or $\bb E_{0,k}^{\rm Tr}$. Then, there exist two distinct collections $\ms A , \ms A_1 \in \{ \ms V_1 , \dots , \ms V_{\mf n} , \ms C_1 , \dots , \ms C_{\mf m} \}$ such that $a \in \ms A$ and $a_1 \in \ms A_1$. Note that $J(a,a_1)>0$ implies $\bb R_0(a,a_1) > 0$ (cf. $J \in \mf F_{\bb E_0}^{\rm div}$), thus $\ms A$ should be a transient class.

Recall that $J \in \mf F_{\bb E_0}^{\div}$ implies
\begin{equation}
\label{df}
\sum_{y:(x,y) \in \bb E_0} J(x,y) \, = \, \sum_{y:(y,x) \in \bb E_0} J(y,x) \quad \text{for all $x \in V$} \; .
\end{equation}
Adding \eqref{df} up for all $x\in \ms A_1$, all edges whose both endpoints belonging to $\ms A_1$ cancel out with each other and we obtain
\begin{equation*}
\sum_{x \in \ms A_1} \sum_{y \notin \ms A_1 : (x,y) \in \bb E_0} J(x,y) \, = \, \sum_{x \in \ms A_1} \sum_{y \notin \ms A_1 : (y,x) \in \bb E_0} J(y,x) \; .
\end{equation*}
Since $J(a,a_1) > 0$ and $a \notin \ms A_1$, the right-hand side is positive, thus the left-hand side is also positive. This guarantees the existence of another collection $\ms A_2 \in \{ \ms V_1,\dots,\ms V_{\mf n},\ms C_1,\dots,\ms C_{\mf m} \}$ such that $J$ is positive along some edge from $\ms A_1$ to $\ms A_2$. Furthermore, $\ms A_2$ is also distinct from the previous collections $\ms A$ or $\ms A_1$ since $\ms A \ne \ms A_1$.

We may apply the same logic to each $\ms A_{\ell-1}$, $\ell\ge2$, recursively, to obtain a new collection $\ms A_\ell$ which is distinct from all previous $\ell$ collections. Since $V$ is finite and $\ms A$ is not an irreducible class, there exists $\ell \ge 1$ such that $\ms A_\ell$ is an irreducible class. Then, adding \eqref{df} up for all $x \in \ms A_\ell$, we obtain
\begin{equation*}
\sum_{x \in \ms A_\ell} \sum_{y \notin \ms A_\ell : (x,y) \in \bb E_0} J(x,y) \, = \, \sum_{x \in \ms A_\ell} \sum_{y \notin \ms A_\ell : (y,x) \in \bb E_0} J(y,x) \; .
\end{equation*}
By the inductive procedure, the right-hand side is positive, but the left-hand side is zero since $\ms A_\ell$ is irreducible, yielding a contradiction. Thus, we conclude that $J(x,y)>0$ only for those $(x,y)$ belonging to $\bb E_{0,j}$ or $\bb E_{0,k}^{\rm Tr}$.


Now, we are ready to characterise the zeros of the functional
$\mb I^{(0)}$.  Recall from the definition after \eqref{o-52} that
$\pi^{(1)}_j$, $1\le j\le \mf n$ denotes the invariant probability
measure on $\ms V_j$ for the $\bb R_0$-chain.

\begin{lemma}
\label{l04}
Fix a probability measure $\mu$ in
$\ms P(V)$ and a flow $J$ in $\mf F^{\div}_{\bb E_0}$. Then,
$\mb I^{(0)} (\mu,J) =0$ if and only if there exist weights $\omega_j$,
$1\le j\le \mf n$, such that $\omega_j\ge 0$, $\sum_j \omega_j =1$,
$\mu = \sum_j \omega_j \, \pi^{(1)}_j$, and $J = J_{\mu,\bb R_0}$.
\end{lemma}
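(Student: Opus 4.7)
The plan is to prove the two directions separately, exploiting the fact that a flow of the form $J_{\mu,\bb R_0}$ is divergence-free precisely when $\mu$ is stationary for the limit generator $\bb L^{(0)}$.

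For the ``if'' direction, suppose $\mu = \sum_j \omega_j\, \pi^{(1)}_j$ and $J = J_{\mu,\bb R_0}$. I would first verify that $J$ lies in $\mf F^{\div}_{\bb E_0}$. Each $\pi^{(1)}_j$ is supported on $\ms V_j$ and invariant for the $\bb R_0$-chain restricted to $\ms V_j$, and because $\ms V_j$ is closed under $\bb R_0$ no edge of $\bb E_0$ leaves $\ms V_j$; hence $J_{\pi^{(1)}_j,\bb R_0}$ is $\bb E_{0,j}$-divergence-free and, by linearity in the first argument, so is $J_{\mu,\bb R_0}$. Once divergence-freeness is in hand, $\mb I^{(0)}(\mu,J)=\sum_{(x,y)\in\bb E_0}\Phi(J_{\mu,\bb R_0}(x,y),J_{\mu,\bb R_0}(x,y))=0$ because $\Phi(p,p)=0$ for every $p\ge 0$.

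For the ``only if'' direction, assume $\mb I^{(0)}(\mu,J)=0$. Then $J\in\mf F^{\div}_{\bb E_0}$ and every nonnegative summand of $\Upsilon_{\bb E_0,\bb R_0}$ vanishes; by \eqref{Phi} this forces $J(x,y)=J_{\mu,\bb R_0}(x,y)$ for every $(x,y)\in\bb E_0$, so $J=J_{\mu,\bb R_0}$ and this flow is divergence-free. It then remains to show that any probability measure $\mu$ on $V$ making $J_{\mu,\bb R_0}$ divergence-free has the claimed form. For this I would invoke the structural decomposition proved just above the lemma: every element of $\mf F^{\div}_{\bb E_0}$ is a sum of flows supported on the internal edges of some $\ms V_j$ or some $\ms C_k$, so in particular $J_{\mu,\bb R_0}(x,y)=0$ whenever $(x,y)\in\bb E_0$ joins distinct strongly connected components of $(V,\bb E_0)$. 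This identity yields $\mu(x)=0$ at every vertex $x$ admitting an outgoing $\bb R_0$-edge to a different SCC. Proceeding by induction down the (acyclic) condensation of $(V,\bb E_0)$, and using at each non-closed class $\ms C_k$ that the chain obtained by retaining only internal edges is irreducible on $\ms C_k$ (so a nonnegative stationary measure that vanishes at one point vanishes everywhere), one deduces $\mu\equiv 0$ on $\Delta$. On each closed class $\ms V_j$, the divergence-free condition restricted to $\bb E_{0,j}$ identifies $\mu|_{\ms V_j}$ as an invariant measure of the irreducible restricted chain and hence as a scalar multiple of $\pi^{(1)}_j$; taking $\omega_j=\mu(\ms V_j)$ completes the decomposition.

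The main obstacle is the step that $\mu$ vanishes on every non-closed class $\ms C_k$ and not merely at its ``exit'' vertices: a purely local divergence argument at a single vertex is insufficient, and one must combine the structural decomposition of $\mf F^{\div}_{\bb E_0}$ with the irreducibility of the internal restricted chains via an induction along the partial order of SCCs in the condensation DAG.
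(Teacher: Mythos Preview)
Your proposal is correct and follows essentially the same approach as the paper's proof: both deduce $J=J_{\mu,\bb R_0}$ from the vanishing of each $\Phi$-summand and then invoke the structural decomposition of $\mf F^{\div}_{\bb E_0}$ together with irreducibility of the internal chain on each transient class $\ms C_k$ to force $\mu|_{\Delta}=0$. The induction along the condensation DAG you propose is not actually needed---the structural decomposition already kills $J$ on \emph{all} inter-component edges at once, so each $\ms C_k$ can be treated independently---and your ``if'' direction via $\Phi(p,p)=0$ is in fact cleaner than the paper's computation.
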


\begin{proof}
First, assume that $\mb I^{(0)} (\mu,J) = 0$ and recall formula
\eqref{09}. Since $\Phi$ is positive,
$\Phi (\, J(x,y) \,,\, J_{\mu,\bb R_0}(x,y) \,) =0$ for all
$(x,y)\in \bb E_0$. As $\Phi (q,p) =0$ if, and only if, $q=p$,
$J(x,y) = J_{\mu,\bb R_0}(x,y)$ for all $(x,y)\in \bb E_0$.  As
$J$ is a divergence-free $\bb E_{0,j}$-flow (cf. $\bb E_{0,k}^{\rm Tr}$-flow) when restricted to each
$\ms V_j$ (cf. $\ms C_k$), so is $J_{\mu,\bb R_0}$. This implies that $\mu$ is
stationary for each $\ms V_j$- or $\ms C_k$-valued Markov chain with jump rates
$\bb R_0$.

Next, fix a transient collection $\ms C_k$.
There exists at least one $x\in \ms C_k$ such that $\bb R_0(x,y)>0$
for some $y \notin \ms C_k$. As $\ms C_k$ is a transient class,
$x\in \ms C_k$, $y \notin \ms C_k$, and
$J\in \mf F^{\div}_{\bb E_0}$, $J(x,y)$ has to vanish.  Since
$J(x,y)=0$ and $\bb R_0(x,y)>0$ we have $\mu(x)=0$. Since $\mu$ is stationary inside $\ms C_k$,
$\mu=0$ on the whole $\ms C_k$.

In summary, $\mu$ is a convex
combination of only the ergodic measures on $\ms V$ (cf. \eqref{05}), that are,
$\pi^{(1)}_j$, $1\le j\le \mf n$, which proves the only if part.

Finally, to show the if part, suppose that $\mu = \sum_j \omega_j \, \pi^{(1)}_j$ for some
probability measure $\omega$ on $S_1=\{1,\dots,\mf n\}$ and that $J=J_{\mu,\bb R_0}$. Then, $\mu$ is stationary with respect to the $\bb R_0$-chain, thus $J \in \mf F_{\bb E_0}^{\rm div}$.
By \eqref{09} and \eqref{02c},
\begin{equation*}
\mb I^{(0)} (\mu,J) \,  =  \, \sum_{(x,y) \in \bb E_0} \Phi \big( \, J(x,y) \, , \, J_{\mu,\bb R_0}(x,y) \, \big) = 0 \; ,
\end{equation*}
where the second equality follows since, by \eqref{Phi}, $\Phi(q,p)=0$ if and only if $q=p$. This concludes the proof of Lemma \ref{l04}.
\end{proof}

\section{Longer time-scales}
\label{sec4}

We start with a lemma which states that the functionals $\mb I^{(p)}$ (cf. \eqref{o-83b}),
$1 \le p \le \mf q$, form a hierarchical structure of zeros.
Recall from \eqref{40} that $\bb I^{(p)}$, $1\le p\le \mf q$, denotes the
Donsker-Varadhan large deviations rate functional of $\bb X_t^{(p)}$.

\begin{lemma}\label{hier}
For each $1 \le p \le \mf q$, ${\bf I}^{(p)}(\mu,J)<\infty$ if and only if ${\bf I}^{(p-1)}(\mu,J)=0$.
\end{lemma}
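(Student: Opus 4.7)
The plan is to reduce both conditions to compatible statements about convex combinations of the measures $\pi^{(p)}_j$ (resp.\ $\pi^{(p-1)}_j$), and then to exploit the recursion \eqref{o-80} to match them.

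First I would observe that the Donsker--Varadhan functional $\bb I^{(p)}(\omega)$ is finite for every $\omega \in \ms P(S_p)$: evaluating the supremand in \eqref{40} at any test function $\mb h$ with $\mb h(k) \ll \mb h(j)$ for $k\neq j$ gives an upper bound by the holding rate of $\bb X^{(p)}_t$ at $j$, so the entire functional is bounded by the maximum holding rate. Combined with \eqref{o-83b}, this yields that $\mb I^{(p)}(\mu, J) < \infty$ if and only if $J = J_{\mu, \bb R_0}$ and $\mu = \sum_{m \in S_p} \omega_m \, \pi^{(p)}_m$ for some $\omega \in \ms P(S_p)$. The task then reduces to matching this condition with the zero condition for $\mb I^{(p-1)}$.

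For $p=1$ the latter condition is provided directly by Lemma \ref{l04}, which states that $\mb I^{(0)}(\mu, J)=0$ if and only if $J = J_{\mu, \bb R_0}$ and $\mu = \sum_{j \in S_1}\omega_j \,\pi^{(1)}_j$ with $\omega \in \ms P(S_1)$. For $p \ge 2$, using again the finiteness of $\bb I^{(p-1)}$ on the whole simplex, $\mb I^{(p-1)}(\mu, J) = 0$ is equivalent to $J = J_{\mu, \bb R_0}$, $\mu = \sum_j \tilde\omega_j \, \pi^{(p-1)}_j$ for some $\tilde\omega \in \ms P(S_{p-1})$, and $\tilde\omega$ lying in the zero level set of $\bb I^{(p-1)}$. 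The standard identification (which I would recall) equates this zero level set with the set of invariant probabilities of $\bb X^{(p-1)}_t$, which is in turn the simplex generated by the invariant measures $M^{(p-1)}_m$ on its closed irreducible classes $\mf R^{(p-1)}_m$, $m \in S_p$.

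Plugging the decomposition $\tilde\omega = \sum_m \omega_m \, M^{(p-1)}_m$, $\omega \in \ms P(S_p)$, into $\mu = \sum_j \tilde\omega_j \, \pi^{(p-1)}_j$ and applying the recursion \eqref{o-80} yields $\mu = \sum_m \omega_m \, \pi^{(p)}_m$, which is exactly the finiteness condition for $\mb I^{(p)}(\mu, J)$. The converse direction is the same calculation read backwards, and the uniqueness of the weights $\tilde\omega$ (which follows from \eqref{o-52}, since the supports $\ms V^{(p-1)}_j$ are pairwise disjoint) ensures that the two representations are genuinely equivalent. The only nontrivial technical point in this plan is the identification of the zero set of the Donsker--Varadhan functional of the possibly reducible chain $\bb X^{(p-1)}_t$ with the convex hull of the extreme stationary measures $M^{(p-1)}_m$; once this is granted, the remainder is direct bookkeeping using \eqref{o-80}.
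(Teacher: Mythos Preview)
Your argument is correct and reaches the conclusion, but it takes a different route from the paper. The paper's proof is essentially a reduction to the corresponding statement for the level-two functionals $\bb I^{(p)}$ on $\ms P(V)$, which is then imported as a black box from \cite[Lemma~5.1 and Eq.~(5.1)]{l-gamma}; only the case $p=1$ is handled internally via Lemma~\ref{l04}. You instead give a self-contained argument: you characterize the zero set of the Donsker--Varadhan functional $\bb I^{(p-1)}$ on $\ms P(S_{p-1})$ as the simplex of invariant measures for $\bb X^{(p-1)}_t$, and then use the recursion \eqref{o-80} to convert between the representations $\mu = \sum_j \tilde\omega_j\,\pi^{(p-1)}_j$ with $\tilde\omega$ invariant and $\mu = \sum_m \omega_m\,\pi^{(p)}_m$. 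This is arguably more transparent, since it makes explicit that the hierarchy is driven by \eqref{o-80}; the price is that you must supply (or cite) the identification of the zero level set of the Donsker--Varadhan functional for a possibly reducible chain, which the paper avoids by quoting \cite{l-gamma}.

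One minor correction: your justification that $\bb I^{(p)}(\omega)<\infty$ is phrased backwards. Evaluating the supremand at a particular $\mb h$ gives a \emph{lower} bound on the supremum, not an upper bound. The correct argument is that for \emph{every} $\mb h$ one has $1-e^{\mb h(k)-\mb h(j)}\le 1$, hence the supremand is bounded above by $\sum_j \omega_j \sum_{k\neq j} r^{(p)}(j,k)$, and therefore so is the supremum. The conclusion you need is unaffected.
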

\begin{proof}
Suppose first that ${\bf I}^{(p)}(\mu,J)<\infty$, so that
by \eqref{o-83b}, $\mu=\sum_{j=1}^{\mf n_{p}}\omega_{j} \, \pi_{j}^{(p)}$
and $J=J_{\mu,\bb R_{0}}$. Then, $\bb I^{(p)}(\omega) = \mb I^{(p)}(\mu,J_{\mu,\bb R_{0}})<\infty$,
thus by \eqref{o-80} and \cite[Lemma 5.1 and Eq. (5.1)]{l-gamma}, we have ${\bf I}^{(p-1)}(\mu,J_{\mu,\bb R_{0}})=0$
for $p\ge2$. If $p=1$, we also have ${\bf I}^{(0)}(\mu,J_{\mu,\bb R_{0}})=0$
by Lemma \ref{l04}.

Next, suppose that ${\bf I}^{(p-1)}(\mu,J)=0$.
If $p=1$, then by Lemma \ref{l04} we have $\mu=\sum_{j=1}^{\mf n}\omega_{j} \,\pi_{j}^{(1)}$
and $J=J_{\mu,\bb R_{0}}$, so that ${\bf I}^{(1)}(\mu,J)=\bb I^{(1)}(\omega)<\infty$
by \eqref{o-83b}. If $p\ge2$, then
by \eqref{o-83b}, $\mu=\sum_{j=1}^{\mf n_{p-1}}\omega_{j} \, \pi_{j}^{(p-1)}$,
$J=J_{\mu,\bb R_{0}}$, and moreover $\bb I^{(p-1)}(\omega)=0$.
Via \cite[Lemma 5.1]{l-gamma} and again \eqref{o-83b}, we obtain that ${\bf I}^{(p)}(\mu,J_{\mu,\bb R_0})<\infty$, 
finishing the verification of Lemma \ref{hier}.
\end{proof}

Now, we provide the proof of Theorem \ref{mt1}.

\begin{proof}[Proof of Theorem \ref{mt1}]
We proceed by induction. Suppose that the statement holds for $p-1$,
$p\ge 1$, where the $0$-th step holds by Proposition \ref{p01}.

Recall that $\ms I_{n}\colon \ms P(V)\to[0,+\infty]$ denotes the
Donsker-Varadhan large deviations rate functional of the original
process $X_t^{(n)}$. Denote by $\color{blue} \ms I^{(p)}$ the Donsker-Varadhan
large deviations rate functional of $\bb X_t^{(p)}$.
Recall from \cite[Theorem 2.5]{l-gamma} that
$\theta_{n}^{(p)} \ms I_{n}$ $\Gamma$-converges to $\ms I^{(p)}$.

First, we consider the $\Gamma$-limsup. Fix
$(\mu,J)\in\ms P(V)\times\mf F_{E}$.  We may assume that
$\mu=\sum_{1\le j\le \mf n_{p}}\omega_{j} \, \pi_{j}^{(p)}$, for some
probability measure $\omega$ in $S_p$, and $J=J_{\mu,\bb
R_{0}}$. Otherwise, by \eqref{o-83b},
$\mb I^{(p)}(\mu, J) = + \infty$, and there is nothing to prove.
Furthermore, by Lemmata B.4 and B.5 in \cite{l-gamma}, it is enough to
prove the theorem for such measures $\mu$ with $\omega_j>0$ for all $j\in S_p$.

Therefore, fix from now on
$\mu=\sum_{1\le j\le \mf n_{p}}\omega_{j} \, \pi_{j}^{(p)}$ for some
$\omega \in \ms P(S_p)$ such that $\omega_j>0$ and
$J=J_{\mu,\bb R_{0}}$. By the proof of the $\Gamma$-convergence
$\theta_n^{(p)} \ms I_n \to \ms I^{(p)}$ presented in \cite[Section 5]{l-gamma},
there exists a sequence of probability measures $(\nu_{n}:n\ge1)$ on
$V$ such that $\nu_n(x)>0$ for all $x\in V$, that converges to
$\mu$ and satisfies
\begin{equation*}
\limsup_{n\to\infty} \, \theta_{n}^{(p)}
\ms I_{n}(\nu_{n}) \, \le \, \ms I^{(p)}(\mu) \, = \, {\bf I}^{(p)} (\mu, J_{\mu,\bb R_0}) \; ,
\end{equation*}
where the equality holds by \eqref{o-83b} and \cite[Eq. (2.18)]{l-gamma}.
By \cite[Theorem 1.6]{bfg}, there exists a unique flow
$J_n^* \in \mf F_E$ such that
\begin{equation}
\label{opt}
\ms I_n (\nu_n) \, = \, I_n (\nu_n , J_n^*) \; .
\end{equation}
Thus, by \cite[Theorem 2.5]{l-gamma} and \eqref{o-83b}, 
\begin{equation*}
\limsup_{n\to\infty} \, \theta_{n}^{(p)} I_{n}(\nu_{n},J_{n}^*)
\,=\, \limsup_{n\to\infty} \, \theta_{n}^{(p)} \ms I_n (\nu_n)
\, \le \, {\bf I}^{(p)}(\mu,J_{\mu,\bb R_{0}}) \; .
\end{equation*}
To conclude the $\Gamma$-limsup part, it remains to prove that
$\lim_{n\to\infty}J_{n}^*=J_{\mu,\bb R_{0}}$.

Since $\nu_n(x)>0$ for all $x\in V$, by \cite[Eq. (2.9)]{bfg}, there
exists a function $g_n \colon V \to \bb R$ such that
\begin{equation}\label{gn}
\log \frac{J_n^*(x,y)}{ J_{\nu_n,R_n}(x,y) } \,
= \, g_n (y) \, - \, g_n (x) \quad \text{for each $(x,y) \in E$} \; .
\end{equation}
By the definition of $I_n$, \eqref{opt}, and \cite[Eq. (2.17)]{bfg},
\begin{equation}
\label{63}
\ms I_n (\nu_n) \, = \, \sum_{(x,y) \in E} \Upsilon_{E,R_n} ( \nu_n , J_n^* ) \, = \,  \sum_{(x,y) \in E} \nu_n(x) \, R_n(x,y) \, \big( \, 1 \, - \, e^{g_n(y) - g_n(x)} \, \big) \; .
\end{equation}
Thus, by \cite[Lemma A.3]{l-gamma}, 
\begin{equation}
\label{pos}
\ms I_n (\nu_n) \, = \, \sum_{(x,y) \in E}
\nu_n (x) \, R_n(x,y) \, \big( \, (g_n(y)\,-\,g_n(x))
\, e^{g_n(y)-g_n(x)} \, - \, e^{g_n(y) - g_n(x)} \, + \, 1 \,
\big)\;. 
\end{equation}

Since $\nu_n$ converges to $\mu$ and $R_n$ to $\bb R_0$,
$\nu_n (x) \, R_n(x,y)$ converges to
$\mu(x) \bb R_0(x,y) = J_{\mu, \bb R_0} (x,y)$. Denote by
$\widehat{\bb E}_0 \subset \bb E_0$ the set of edges $(x,y)$ such that
$J_{\mu, \bb R_0} (x,y)>0$.

As $\nu_n \to \mu$ and $\ms I^{(0)} (\mu) =0$ by Lemma \ref{l04},
$\lim_n \ms I_n (\nu_n) = \ms I^{(0)} (\mu)= 0$. Since all terms
on the right-hand side of \eqref{pos} are nonnegative, for all
$(x,y) \in \widehat{\bb E}_0$,
$\lim_{n\to \infty} [g_n(y)\,-\,g_n(x)] =0$. Thus, by \eqref{gn}, for
such edges, $\lim_{n\to \infty} J^*_n(x,y) = \lim_{n\to \infty}
J_{\nu_n, R_n}(x,y) =  J_{\mu, \bb R_0} (x,y)$. 

It remains to consider the edges $(x,y) \in E \setminus \widehat{\bb
E}_0$. By \eqref{63} and the previous paragraph,
\begin{equation*}
0\,=\, \lim_{n\to\infty} \ms I_n (\nu_n) \, = \,
\lim_{n\to\infty} \sum_{(x,y) \in E\setminus \widehat{\bb E}_0 }
\nu_n(x) \, R_n(x,y) \, \big( \, 1 \, - \, e^{g_n(y) - g_n(x)} \, \big) \; .
\end{equation*}
For edges in $E\setminus \widehat{\bb E}_0$,
$\lim_{n\to\infty} \nu_n (x) \, R_n(x,y) =0$. Thus, by \eqref{gn},
\begin{equation*}
\begin{aligned}
0\,=\, 
 -\, \lim_{n\to\infty} \sum_{(x,y) \in E\setminus \widehat{\bb E}_0 }
\nu_n(x) \, R_n(x,y) \, e^{g_n(y) \, - \, g_n(x)} 
\,=\, -\, \lim_{n\to\infty} \sum_{(x,y) \in E\setminus \widehat{\bb E}_0 }
J^*_n(x,y)  \; .
\end{aligned}
\end{equation*}
Since the flows are non-negative, $J^*_n(x,y) \to 0 = J_{\mu, \bb
R_0}(x,y)$ for all $(x,y) \in E\setminus \widehat{\bb E}_0$. This completes
the proof of the $\Gamma$-limsup.

Next, we consider the $\Gamma$-liminf. Fix $(\mu,J)\in\ms P(V)\times\mf F_{E}$
and an arbitrary sequence $(\mu_{n},J_{n})\in\ms P(V)\times\mf F_{E}$,
$n\ge1$, that converges to $(\mu,J)$. We have
\begin{equation}\label{eq3}
\liminf_{n\to\infty} \, \theta_{n}^{(p)} \, \ms I_{n}(\mu_{n}) \, \ge \, \ms I^{(p)}(\mu) \; .
\end{equation}
If $\mb I^{(p)}(\mu,J)<\infty$, such that $J=J_{\mu,\bb R_0}$ by \eqref{o-83b}, then we easily obtain that
\[
\liminf_{n\to\infty} \, \theta_{n}^{(p)} \, I_{n}(\mu_{n},J_{n})\,\ge\,\liminf_{n\to\infty}\,\theta_{n}^{(p)} \, \ms I_{n}(\mu_{n})\,\ge\,\ms I^{(p)}(\mu)\,=\,{\bf I}^{(p)}(\mu,J_{\mu,\bb R_0}) \; ,
\]
where the first inequality holds by \eqref{60}, the second inequality
holds by \eqref{eq3}, and the equality holds by \eqref{o-83b} and \cite[Eq. (2.18)]{l-gamma}.
Finally, suppose that $\mb I^{(p)}(\mu,J)=\infty$. Then by Lemma
\ref{hier}, ${\bf I}^{(p-1)}(\mu,J)>0$. By the induction hypothesis at
level $p-1$,
\begin{equation*}
\liminf_{n\to\infty} \, \theta_{n}^{(p-1)} \, I_{n}(\mu_{n},J_{n})\,\ge\,{\bf I}^{(p-1)}(\mu,J)\,>\,0 \; .
\end{equation*}
Since $\theta_{n}^{(p-1)} \prec \theta_{n}^{(p)}$ by \eqref{51}, we conclude that
\begin{equation*}
\liminf_{n\to\infty} \,\theta_{n}^{(p)} \, I_{n}(\mu_{n},J_{n}) \, = \, \infty\, = \, {\bf I}^{(p)}(\mu,J) \; ,
\end{equation*}
finishing the proof of Theorem \ref{mt1}.
\end{proof}

\section{The large deviations rate functional}
\label{sec5}

This section is divided into three parts.
In the first part, we prove that the Donsker-Varadhan, or DV,
large deviations rate functional characterises the dynamics if the
Markov chain is \emph{reversible} (Corollary \ref{ap-l02}); in contrast, it does not
characterise the dynamics if it is non-reversible (Example \ref{ex01}).
In the second part, in the measure-current formalism, we may ease the reversibility condition;
namely, the measure-current, or BFG for Bertini, Faggionato and Gabrielli, large
deviations rate functional characterises the dynamics as long as \emph{all elements are recurrent} (Corollary \ref{ap-l9}), and does not otherwise (Example \ref{ex02}). Indeed, every state in a reversible Markov chain is recurrent, but not vice versa.
In the third part, we calculate the first and second functional derivates of the DV rate functional which extend the well-known analogues for the large deviations rate functional of i.i.d. random variables to the setting of finite-state Markov chains. 

The term ``characterise'' above should be interpreted as follows. Fix a graph structure $(V,E)$ and suppose that there are two Markov chains $(X_t : t \ge 0)$ and $(X_t' : t \ge 0)$ defined thereon. Denote by $\ms I, I$ and $\ms I', I'$ the DV/BFG rate functionals of $X_t$ and $X_t'$, respectively. Then, we aim to find optimal conditions on $(V,E)$ such that $\ms I = \ms I'$, or $I = I'$, implies $(X_t : t \ge 0) \equiv (X_t' : t \ge 0)$.

We adopt in this section the notation introduced in Section \ref{sec1}.

\subsection*{Donsker-Varadhan rate functional}

We start with the Donsker-Varadhan rate functional, especially with two elementary identities. By Lemma A.7 and equation
(A.14) in \cite{l-gamma},
\begin{equation}
\label{ap-02}
\ms I(\delta_z) = \lambda (z) \quad \text{for all}\quad z\in V\;.
\end{equation}

Fix $x\neq y\in V$, and let
$\mu_\theta = \theta \delta_x + (1-\theta) \delta_y$, $0<\theta<1$. We
claim that
\begin{equation}
\label{ap-01}
\ms I(\mu_\theta) \,=\, 
\theta \, \ms I(\delta_x)  \,+\, (1-\theta)\, \, \ms I(\delta_y)
\,-\, 2\, \sqrt{R(x,y)\, R(y,x)}\,
\sqrt{\theta (1-\theta)} \;.
\end{equation}
Indeed, if $R(x,y)=0$ or $R(y,x)=0$, by Lemma A.7 and equation (A.14)
in \cite{l-gamma},
$\ms I(\mu_\theta) = \theta \, \ms I(\delta_x) + (1-\theta) \, \ms
I(\delta_y)$. On the other hand, if $R(x,y)\, R(y,x)>0$, by the same
result,
\begin{equation*}
\ms I(\mu_\theta) \,=\, \ms I_{x,y} (\mu_\theta)
\,+\, \theta \,[\, \lambda (x) - R(x,y)\,]\,+\, (1-\theta)\,
\,[\, \lambda (y) - R(y,x)\,]\;.
\end{equation*}
In this formula, $\ms I_{x,y}$ represents the DV large deviations rate
functional of the $\{x,y\}$-valued Markov chain which jumps from $x$
to $y$ at rate $R(x,y)$ and from $y$ to $x$ at rate $R(y,x)$.  Since a
two-state chain is always reversible, an elementary computation
yields that
\begin{equation*}
\ms I_{x,y}(\mu_\theta)  \,=\, \theta\, R(x,y) \,+\,
(1-\theta) \, R(y,x) \,-\, 2\, \sqrt{R(x,y)\, R(y,x)}\,
\sqrt{\theta (1-\theta)}\;.
\end{equation*}
Adding the two previous expressions yields that
\begin{equation*}
\ms I(\mu_\theta) \, =\, 
\theta \, \lambda (x) \,+\, (1-\theta)\, \,\lambda (y)
\,-\, 2\, \sqrt{R(x,y)\, R(y,x)}\,
\sqrt{\theta (1-\theta)} \;.
\end{equation*}
Since $\lambda (z) = \ms I(\delta_z)$, Claim \eqref{ap-01} is proved.
We summarize in Lemma \ref{ap-l3} the previous obsevations.

\begin{lemma}
\label{ap-l3}
One can derive from $\ms I(\cdot)$ the values of $\lambda (z)$,
$z\in V$, and $R(x,y) \, R(y,x)$ for all $y\neq x\in V$.
\end{lemma}

The next result shows that we may also recover from $\ms I(\cdot)$ the stationary profile.
Suppose that $V_1 , \dots, V_n$ are the closed irreducible classes of the chain,
and denote by $\pi_j$, $1\le j\le n$, the invariant probability measure on $V_j$.

\begin{lemma}
\label{ap-l1}
One can derive from $\ms I(\cdot)$ the measures $\pi_j$ for all $1\le j\le n$.
\end{lemma}

\begin{proof}
By \cite[Lemma A.8]{l-gamma}, $\ms I^{-1}(0)$ equals the set of all stationary states,
which are exactly the non-negative linear combinations of $\pi_j$, $1\le j\le n$.
Thus, from the DV rate functional $\ms I$, we can determine each stationary measure
$\pi_j$ on $V_j$.
\end{proof}

\begin{corollary}
\label{ap-l02}
Assume that the chain is reversible.
Then, it is possible to recover the generator $\ms L$ of
the Markov chain from the rate functional $\ms I$.
\end{corollary}

\begin{proof}
By reversibility, all elements of $V$ are recurrent, thus $V=V_1 \cup \cdots \cup V_n$.
Fix a collection $V_j$ and $x,y \in V_j$. By Lemma \ref{ap-l3} we recover from $\ms I$
the value of $R(x,y) \, R(y,x)$, and by Lemma \ref{ap-l1} we recover the values of $\pi_j(x)$ and $\pi_j(y)$.
By the detailed balance condition, $\pi_j(x) \, R(x,y) = \pi_j(y) \, R(y,x)$, thus
\[
R(x,y)^2 = R(x,y) \, \frac{\pi_j(y) \, R(y,x)}{\pi_j(x)} = \frac{\pi_j(y)}{\pi_j(x)} \, R(x,y) \, R(y,x) \;,
\]
which implies that we recover the value of $R(x,y)$ and similarly $R(y,x)$. This concludes the proof.
\end{proof}

\begin{example}\label{ex01}
This example illustrates the fact that we can not do
better, that the DV large deviations rate functional does not
characterise the dynamics if the chain is not reversible.
Consider the irreducible dynamics on $V = \{a,b,c\}$ whose jump rates
are given by $R(a,b) = R(b,c) = R(c,a)=1$ and $0$ otherwise. An elementary computation
yields that
\begin{equation*}
\ms I(\mu) \,=\, 1\,-\, 3 \, (\mu_a\, \mu_b\, \mu_c)^{1/3}\;.
\end{equation*}
Hence, this dynamics has the same DV rate functional as the one
where the jumps occur in the opposite cyclic order.
\end{example}

\subsection*{Bertini-Faggionato-Gabrielli rate functional}

For a function $H \colon V\to \bb R$, denote by
$\ms J_H \colon \ms P(V)\to \bb R$ the functional given by
\begin{equation*}
{\color{blue} \ms J_H (\mu)} \, := \, -\,\int_V e^{-H}\, \ms L e^H\, {\rm d}\,\mu
\,=\,\sum_{x, y\in V} \mu(x)\, R(x,y)\,
\big[\, 1 - e^{H (y) - H (x)}\,\big] \;.
\end{equation*}
Therefore,
\begin{equation*}
\ms I (\mu) \,=\, \sup_{H} \, \ms J_H (\mu) \;,
\end{equation*}
where the supremum is carried over all functions $H \colon V\to \bb
R$.

Denote by $R_{H}(x,y)$ the jump rates of the dynamics tilted by a
function $H\colon V\to \bb R$:
$\color{blue} R_{H}(x,y) := R(x,y) \, \exp \{H(y) - H(x)\}$. Let
$\color{blue} \ms L_{H}$ be the generator associated to the rates
$R_{H}$.  Next results are Lemmata A.2 and A.3 in\cite{l-gamma}.

\begin{lemma}
\label{ap-l10}
A measure $\mu\in\ms P(V)$ is a stationary state for the Markov chain
induced by the generator $\ms L_{H}$ if and only if
\begin{equation}
\label{ap-12}
\ms I(\mu) \,=\, \ms J_H(\mu)\;.
\end{equation}
\end{lemma}

\begin{lemma}
\label{ap-l11}
Suppose that the Markov chain is irreducible. Fix a measure
$\mu \in\ms P(V)$ such that $\mu(x)>0$ for all $x\in V$.  Then, there
exists a function $H\colon V\to \bb R$, denoted by
$\color{blue} H_\mu$, such that
$\ms I(\mu) \,=\, \ms J_{H_\mu} (\mu)$.  The function $H_\mu$ is
unique, up to an additive constant.
\end{lemma}






Recall from \eqref{02} the definition of the BFG large deviations rate
functional $I\colon \ms P(V) \times \mf F_E \to [0,+\infty]$, and let
$\color{blue} \ms P_+(V) := \{ \mu\in\ms P(V) : \mu(x)>0\,\; \forall
x\in V\}$.

\begin{lemma}
\label{ap-l8}
Suppose that the Markov chain is irreducible. For all $\mu\in \ms P_+(V)$,
there exists a unique current $J_\mu^* \in \mf F^{\rm div}_E$ which minimises
$I(\mu, \cdot)$, which is given by
$J_\mu^*(x,y) = \mu(x) \, R(x,y) \, \exp\{H_\mu(y) - H_\mu(x)\}$.
\end{lemma}

\begin{proof}
By \cite[Theorem 1.6]{bfg}, the existence of a unique current $J_\mu^* \in \mf F_E^{\rm div}$ minimising $I(\mu,\cdot)$ is guaranteed.
As already claimed in \eqref{gn}, since $\mu(x)>0$ for all $x\in V$, \cite[Eq. (2.9)]{bfg} guarantees the existence of a function $H:V \to \bb R$ such that
\begin{equation}
\log \frac{J_\mu^*(x,y)}{ \mu(x) \, R(x,y) } \,
= \, H (y) \, - \, H (x) \quad \text{for each } (x,y) \in E \; .
\end{equation}
It remains to prove that $H-H_\mu$ is a constant function. By \cite[Eq. (2.17)]{bfg},
\begin{equation}
\ms I (\mu) \, = \, I(\mu,J_\mu^*) = \sum_{(x,y) \in E} \mu(x) \, R(x,y)  \, \big( \, 1 \, - \, e^{H(y) \, - \, H(x)} \, \big) \, = \, \ms I_H(\mu) \; .
\end{equation}
Thus, by the uniqueness statement in Lemma \ref{ap-l11}, $H-H_\mu$ is constant, which completes the proof.
\end{proof}

\begin{corollary}
\label{ap-l9}
Assume that all elements of $V$ are recurrent. Then, we
may recover from the BFG rate functional
$I\colon \ms P(V) \times \mf F_E \to [0,+\infty]$ the jump rates $R$.
\end{corollary}

\begin{proof}
Since all states are recurrent, we have $V=V_1 \cup \cdots \cup V_n$ where $V_j$'s are pairwise isolated, i.e., $R(x,y)=0$ for all $x\in V_j$, $y\in V_k$ such that $j \ne k$. For each $j$, denote by $E_j$ the set of edges whose both endpoints belong to $V_j$, such that $E=E_1 \cup \cdots \cup E_j$.

First, we claim that we can recover each $I_j \colon \ms P(V_j) \times \mf F_{E_j} \to [0,+\infty]$, $1\le j\le n$, which is the BFG rate functional for the chain restricted to $V_j$. Indeed, given any pair $(\mu_j , J_j) \in \ms P(V_j) \times \mf F_{E_j}$, extend it to $\ms P(V) \times \mf F_E$ by declaring $\mu_j$ and $J_j$ as $0$ outside $V_j$ and $E_j$, respectively. By \eqref{02}, if $\mu_j \notin \mf F_{E_j}^{\rm div}$ then $I_j(\mu_j,J_j)=+\infty$, and if $\mu_j \in \mf F_{E_j}^{\rm div}$ then
\[
I(\mu_j,J_j) \, = \, I_j(\mu_j,J_j) \, + \, \sum_{k \ne j} \sum_{(x,y) \in E_k} \Phi (0,0) = I_j (\mu_j ,J_j) \; .
\]
Thus, $I_j(\mu_j,J_j) = I(\mu_j,J_j)$ in any case, and $I_j$ is recovered from $I$.

Now, fix a collection $V_j$ and consider the restricted irreducible chain on $V_j$. Since $\ms I$ is recovered from $I$ by $\ms I(\mu) = \inf_J I(\mu,J)$, by Lemma \ref{ap-l1}, we readily recover the stationary profile $\pi_j \in \ms P_+(V_j)$.
Then, Lemma \ref{ap-l8} implies that $I^{-1}(0)$ equals $\{ (\pi_j,J_{\pi_j}^*) \}$ with
$J_{\pi_j}^*(x,y) = \pi_j(x) \, R(x,y) \, \exp\{H_{\pi_j}(y) - H_{\pi_j}(x)\}$, where $H_{\pi_j}$ is constant by Lemma \ref{ap-l11} since $\ms I(\pi_j) = 0 = \ms I_0 (\pi_j)$ owing to the stationarity of $\pi_j$. This implies that $J_{\pi_j}^*(x,y) = \pi_j(x) \, R(x,y)$, thus, we may recover $R(x,y)$ for all $(x,y) \in E_j$ due to the fact that $\pi_j (x) > 0$ for all $x \in V_j$. This concludes the proof of the corollary.
\end{proof}

\begin{example}\label{ex02}
We claim that Corollary \ref{ap-l9} is optimal, in the sense that the BFG large deviations rate functional does not characterise the dynamics if there exists a transient state.

Consider two three-state Markov chains on
$V = \{a,b,c\}$. In both, the Markov chain jumps back and forth from
$b$ to $c$ at rate $1$. In the first it jumps from $a$ to $b$ at rate
$2$, while in the second it jumps from $a$ to $c$ at rate $2$.
Denote by $I$ and $I'$ the respective BFG rate functionals.
Then, we have
\begin{equation*}
I(\mu,J) \, = \, \begin{cases}
\Phi (J_{ab}, 2\mu_a) \, + \, \Phi (J_{bc}, \mu_b) \, + \, \Phi (J_{cb}, \mu_c) & \text{if}\;\; J \in \mf F_E^{\rm div} \; ,\\
+\infty & \text{otherwise}\;.
\end{cases}
\end{equation*}
Since $J \in \mf F_E^{\rm div}$ if and only if $J_{ab}=0$ and $J_{bc}=J_{cb}$, we may reformulate
\begin{equation}
I(\mu,J) \, = \, \begin{cases}
2\mu_a \, + \, \Phi (J_{bc}, \mu_b) \, + \, \Phi (J_{bc}, \mu_c) & \text{if}\;\; J \in \mf F_E^{\rm div} \; ,\\
+\infty & \text{otherwise}\;.
\end{cases}
\end{equation}
The right-hand side above is symmetric in $b$ and $c$, which implies that $I = I'$.
\end{example}

We complete this subsection with a result on the map
$\mu \mapsto H_\mu$. Choose $x_0\in V$ and set $H_\mu(x_0) =0$ for
all $\mu$. With this convention, $H_\mu$ is unique by Lemma \ref{ap-l11}.

\begin{lemma}
\label{ap-l12}
Under the hypotheses of Lemma \ref{ap-l8}, and with the previous
convention, the map $\mu \mapsto H_\mu$ is a bijection.
\end{lemma}

\begin{proof}
The map is surjective. Indeed, fix a function $H$ such that
$H(x_0)=0$. By the hypotheses of Lemma \ref{ap-l8}, the original
dynamics is irreducible. Hence, the one induced by the generator
$\ms L_H$ is also irreducible. It admits, therefore, a unique
stationary state, denoted by $\nu$. By Lemma \ref{ap-l10},
$\ms I(\nu) = \ms J_H(\nu)$.  On the other hand, by irreducibility
$\nu\in \ms P_+(V)$. Thus, by Lemma \ref{ap-l11}, there exists a
unique function, denoted by $H_\nu$, such that $H_\nu(x_0)=0$ and
$\ms I(\nu) = \ms J_{H_\nu}(\nu)$. By uniqueness, $H=H_\nu$, which
proves that the map is surjective.

The map is clearly injective. Indeed, suppose that $H_\mu=H_\nu$. By
Lemma \ref{ap-l11}, $\ms I(\mu) = \ms J_{H_\mu}(\mu)$. Thus, by Lemma
\ref{ap-l10} $\mu$ is stationary for the Markov chain induced by the
generator $\ms L_{H_\mu}$. Since $\ms L_{H_\mu} = \ms L_{H_\nu}$,
$\mu$ is also stationary for the Markov chain induced by the generator
$\ms L_{H_\nu}$. By the uniqueness of stationary states, $\nu=\mu$, as
claimed.
\end{proof}

\subsection*{Functional derivative}

In this subsection, we assume that the chain is irreducible, such that
there exists a unique positive stationary state, say $\pi \in \ms P_+(V)$.
By \cite[Lemma A.8]{l-gamma}, $\ms I^{-1}(0) = \pi$.

For a measure $\mu\in \ms P(V)$, denote by $L^2(\mu)$ the space of
$\mu$-square summable functions $f\colon V \to \bb R$, and by
$\< \cdot \,,\, \cdot \>_{\mu}$ the scalar product in $L^2(\mu)$:
\begin{equation*}
\< f, g \>_{\mu} \,=\, \sum_{x\in V} \mu(x)\, f(x)\, g(x)\;, \quad
f\,,\, g \,\in\, L^2(\mu) \;.
\end{equation*}
Let $\color{blue} \ms L_{H_\mu}^*$ be the adjoint of $\ms L_{H_\mu}$
in $L^2(\mu)$, and $\ms L^s_{H_\mu}$ the symmetric part of
$\ms L_{H_\mu}$,
$\color{blue} \ms L^s_{H_\mu} := (1/2) (\ms L_{H_\mu} + \ms
L^*_{H_\mu})$.




Let $\ms G\colon \ms P(V) \times L^\infty(V) \to \bb R$ be the
functional given by
\begin{equation*}
{\color{blue} \ms G(\mu, H)} \,:=\, \sum_{x, y\in V} \mu(x)\, R(x,y)\,
\big[\, 1 - e^{H(y) \, - \, H(x)}\,\big] \;,
\end{equation*}
so that
\begin{equation}
\label{ap-10}
\ms I (\mu) \,=\, \sup_{H} \, \ms G(\mu, H)  \;.
\end{equation}
In the next lemma, we compute the functional derivative of $\ms I$
defined by
\begin{equation*}
{\color{blue} \frac{\delta \, \ms I}{\delta \mu}  (\mu; \nu)} \,:=\,
\lim_{\epsilon\to 0} \frac{1}{\epsilon}\, \big\{ \, \ms I(\mu + \epsilon\nu)
- \ms I(\mu) \,\big\} \;.
\end{equation*}
For $\mu + \epsilon\nu$ to be a probability measure, we need to assume
that $\sum_x \nu(x) =0$ and that $\nu(y) \ge 0$ for all $y\in V$ such that
$\mu(y)=0$.

In view of \eqref{ap-10} and Lemma \ref{ap-l11}, $\ms I (\mu) \,=\, \ms G(\mu, H_\mu)$ so
that, formally,
\begin{equation*}
\frac{\delta \, \ms I}{\delta \mu} (\mu; \nu) \,=\,
\frac{\delta \, \ms G}{\delta \mu} (\mu, H_\mu; \nu) \,+\,
\frac{\delta \, \ms G}{\delta H} (\mu, H_\mu; \nu) \,
\frac{\delta H_\mu}{\delta \mu} \;.
\end{equation*}
As $H_\mu$ is a critical point of $\ms G (\mu, \cdot )$, $(\delta
\, \ms G/ \delta H) (\mu, H_\mu; \cdot ) =0$ and
\begin{equation*}
\frac{\delta \, \ms I}{\delta \mu} (\mu; \nu) \,=\,
\frac{\delta \, \ms G}{\delta \mu} (\mu, H_\mu; \nu ) \,=\,
\sum_{x, y\in V} \nu(x)\, R(x,y)\, \big[\, 1 \, - \, e^{H_\mu(y) \, - \, H_\mu
(x)}\,\big]\;.
\end{equation*}
Next result makes this formal argument rigorous.

\begin{lemma}
\label{ap-l6}
Fix a probability measure $\mu\in \ms P(V)$ such that $\mu(x)>0$ for
all $x\in V$, and $f \colon V \to \bb R$ such that $E_\mu[f]=0$.  Let
$\mu_\epsilon = \mu +\epsilon \nu = \mu (1+\epsilon f)$,
$\epsilon>0$. Then,
\begin{equation*}
\frac{\delta \, \ms I}{\delta \mu}  (\mu; \nu) \,=\,
\sum_{x, y\in V} \nu(x)\, R(x,y)\, \big[\, 1 \, - \, e^{H_\mu(y) \, - \, H_\mu
(x)}\,\big]
\;.
\end{equation*}
In particular, as $H_\pi =0$, $(\delta/\delta \mu)  \, \ms I (\pi; \nu) = 0$ for all
$\nu$. 
\end{lemma}

\begin{proof}
By definition, $\mu_\epsilon$ is a probability measure for $\epsilon$
sufficiently small. Let $H_\epsilon\colon V\to \bb R$ be the function
associated to $\mu_\epsilon$, $H_\epsilon = H_{\mu_\epsilon}$
introduced in Lemma \ref{ap-l11}.

\smallskip\noindent{\sl Claim A:} The sequence of measures
$H_\epsilon$ converges to $H_\mu$ as $\epsilon\to 0$.
\smallskip

Indeed, by equation (A.8) in \cite{l-gamma}, the sequence
$H_\epsilon(y) - H_\epsilon (x)$ is uniformly (in $x\neq y\in V$,
$\epsilon>0$) bounded.  Since $\mu_\epsilon$ is the stationary state
for the dynamics tilted by $H_\epsilon$,
\begin{equation*}
\mu_\epsilon(x) \sum_{y\in V} R(x,y)\, e^{H_\epsilon(y) \,-\, H_\epsilon(x)}
\,=\,
\sum_{y\in V} \mu_\epsilon(y)  \, R(y,x)\, e^{H_\epsilon(x) \,-\, H_\epsilon(y)}
\end{equation*}
for all $x\in V$. Let $H$ be a limit point of the sequence
$H_\epsilon$. Since $\mu_\epsilon \to \mu$, letting $\epsilon\to 0$
yields that $\mu$ is the stationary state of the tilted dynamics by
$H$.  Since $\mu(x)>0$ for all $x\in V$, by the uniqueness result
stated in \cite[Lemma A.3]{l-gamma}, $H = H_\mu$, proving the claim.

\smallskip\noindent{\sl Claim B:}
Let $G_\epsilon\colon V \to \bb R$ be defined as
$H_\epsilon(x) = H_\mu(x) + \epsilon \, G_\epsilon(x)$.
Then, 
\begin{align*}
\lim_{\epsilon\to 0}  G_\epsilon
\,=\, (1/2)\,  (\ms L^s_{H_\mu})^{-1}\, \ms  L^*_{H_\mu} f\;.
\end{align*}
\smallskip

We first show that the sequence $G_\epsilon (y) -G_\epsilon(x)$ is
uniformly bounded. By the previous claim,
$\epsilon \, G_\epsilon(x) \to 0$ for all $x\in V$.  Since $\mu_\epsilon$
is the stationary state of the tilted dynamics by $H_\epsilon$,
\begin{equation*}
\sum_{x, y \in V} \mu (x)\, [1+ \epsilon \, f (x)]\,
R_{H_\mu}(x,y)\, e^{ \epsilon \, [G_\epsilon(y) \,-\, G_\epsilon(x)] } \,\{g(y) \,-\, g(x)\}\,=\, 0
\end{equation*}
for every function $g\colon V\to \bb R$. Let $A_\epsilon(x,y) \, := \, 1$ if $G_\epsilon(x) = G_\epsilon(y)$ and
\begin{equation*}
A_\epsilon (x,y) \,:=\, \frac{ e^{ \epsilon \, [G_\epsilon(y)
\,-\, G_\epsilon(x)] } \,-\, 1}{\epsilon \, [G_\epsilon(y)
\,-\, G_\epsilon(x)] }\;,
\end{equation*}
if $G_\epsilon(x) \ne G_\epsilon(y)$.
As $\epsilon \, G_\epsilon \to 0$,
$A_\epsilon (x,y) = 1 + a_\epsilon (x,y)$, where
$a_\epsilon (x,y)\to 0$ for all $y\neq x\in V$.  Since $\mu$ is the
stationary state for the dynamics tilted by $H_\mu$,
\begin{equation}
\label{ap-08}
\begin{aligned}
0\, =&\, \sum_{x, y\in V} \mu_\epsilon (x)\,
R_{H_\mu}(x,y)\, A_\epsilon (x,y)\,
[G_\epsilon (y) -G_\epsilon(x)] \,\{g(y) - g(x)\} 
\\
\,& +\, \sum_{x, y\in V} \mu(x)\, f (x)\,
R_{H_\mu}(x,y)\, \{g(y) - g(x)\}
\end{aligned}
\end{equation}
for every function $g\colon V\to \bb R$. 

Fix $y_0\in V$, and set $g = \delta_{y_0}$. With this choice, the
previous equation becomes
\begin{align*}
& \sum_{x\in V}
\big\{ \, \mu_\epsilon (x) \, R_{H_\mu}(x,y_0)\, A_\epsilon (x,y_0)\,
+\, \mu_\epsilon (y_0) \,  R_{H_\mu}(y_0,x)\, A_\epsilon (y_0,x)\,\big\}\,
[G_\epsilon(x) \,-\, G_\epsilon(y_0)]
\\
&\, =\,
\sum_{x\in V} \big\{ \, \mu (x)\, f (x) \, R_{H_\mu}(x,y_0)\,-\,
\mu (y_0)\, f (y_0) \, R_{H_\mu}(y_0,x)\,\big\}
\;.
\end{align*}
Choose $y_0$ as the point in $V$ where $G_\epsilon$ attains its
minimum. As $A_\epsilon = 1 + a_\epsilon$ and $\mu_\epsilon = \mu + \epsilon \nu$,
where $a_\epsilon\to 0$ as $\epsilon \to 0$ and $\mu$ is bounded away from $0$,
All terms in the first sum are non-negative, and strictly positive if
$R(x,y_0) + R(y_0,x)>0$. Therefore, there exists a positive constant
$C_0$, independent of $\epsilon$ such that
$0\le G_\epsilon(x) -G_\epsilon(y_0) \le C_0$ for all $x\in V$ such
that $R(x,y_0) + R(y_0,x)>0$.

Denote by $N(y)$ the neighbourhood of $y$, the set of all points
$x\in V$ such that $R(x,y) + R(y,x)>0$.  Let $y_1$ be the minimum in
the remaining sites:
$G_\epsilon (y_1) = \min \{G_\epsilon (x) : x \neq y_0\}$. If
$y_1\not\in N(y_0)$, all terms in the first sum (with $y_1$ replacing
$y_0$) are positive and we can repeat the same argument. If not, the
term $x=y_0$ is negative, but $G_\epsilon(y_1) -G_\epsilon(y_0)$ is
bounded by a constant independent of $\epsilon$ by the previous
paragraph. We may move this term to the righ-hand side of the equality
and obtain a uniform bound for $|\, G_\epsilon(y_1) -G_\epsilon(x)\,|$
for all $x\in N(y_1)$.

If $N(y_0) \cap N(y_1) \neq \varnothing$, there is a uniform in
$\epsilon$ bound for $|\, G_\epsilon(x) -G_\epsilon(x')\,|$ for
$x,x'\in N(y_0) \cup N(y_1)$. Since the chain is irreducible, we may
proceed with this inductive argument to obtain a uniform bound for
$|\, G_\epsilon(x) -G_\epsilon(y)\,|$ for all $x,y\in V$.

Let $G$ be a limit point of the sequence $G_\epsilon$. By
\eqref{ap-08},
\begin{align*}
0\, =&\, \sum_{x, y\in V} \mu (x)\,
R_{H_\mu}(x,y)\,  [G (y) \,-\, G (x)] \,\{g(y) \,-\, g(x)\} 
\\
\,& +\, \sum_{x, y\in V} \mu(x)\, f (x)\,
R_{H_\mu}(x,y)\, \{g(y) \,-\, g(x)\}\;.
\end{align*}
This equation can be rewritten as
\begin{align*}
2\, \< \ms L^s_{H_\mu} G\,,\, g\>_{\mu} \,=\, \< f\,,\, \ms
L_{H_\mu} g\>_{\mu}\; .
\end{align*}
Since the previous equation holds for all $g$, Claim B is
proved.\smallskip

We may now complete the proof of the lemma. By \cite[Lemma A.3]{l-gamma},
\begin{equation}
\label{ap-03}
\ms I(\mu_\epsilon) \,=\, \sum_{x,y\in V} \mu_\epsilon(x)\, R(x,y)\,
\big[\, 1 \,-\, e^{ H_\epsilon(y) \,-\, H_\epsilon(x)}\,\big]\;.
\end{equation}
Replace $\mu_\epsilon(x)$ by $\mu (x) \, (1 + \epsilon f(x))$,
and expand in $\epsilon$ to obtain that
\begin{equation*}
\begin{aligned}
\frac{1}{\epsilon}\, 
\big\{ \ms I(\mu_\epsilon) \,-\, \ms I(\mu) \big\} \, =\,
& -\,  \sum_{x,y\in V} \mu (x)\, R_{H_\mu} (x,y)\,
[G_\epsilon(y) -G_\epsilon(x)] \, A_\epsilon(x,y)
\\
& + \, \sum_{x,y\in V} \mu (x) \, f (x)\, R(x,y)\,
\big[\, 1 \,-\, e^{ H_\epsilon(y) \,-\, H_\epsilon(x)}\,\big]
\,+\, O(\epsilon) \;.
\end{aligned}
\end{equation*}
The first line vanishes as $\epsilon \to 0$ because $A_\epsilon \to 1$ and
$\mu$ is the stationary state for the
tilted dynamics. Via Claim A, this completes the proof of the lemma.
\end{proof}

\begin{remark}
 In the previous proof, we have shown that
\begin{equation}
\label{ap-09}
\frac{\delta H_\mu}{\delta \mu} (\mu; \nu)  \,=\,
(1/2)\,  (\ms L^s_{H_\mu})^{-1}\, \ms  L^*_{H_\mu} f\;,
\end{equation}
where $f = {\rm d}\nu/{\rm d}\mu$.
\end{remark}

We turn to the second derivative of the rate functional.

\begin{proposition}
\label{ap-l7}
Fix a probability measure $\mu\in \ms P(V)$ such that $\mu(x)>0$ for
all $x\in V$, and two signed measures $\nu_i \colon V \to \bb R$,
$i=1,2$, such that $\sum_x \nu_i(x) =0$. Then,
\begin{equation*}
\begin{aligned}
\frac{\delta^2 \, \ms I}{\delta \mu^2}  (\mu; \nu_1, \nu_2) \, & :=\,
\lim_{\epsilon\to 0} \frac{1}{\epsilon}\, \Big\{ \,
\frac{\delta \, \ms I}{\delta \mu} (\mu + \epsilon\nu_2;\nu_1)
\,-\, \frac{\delta \, \ms I}{\delta \mu} (\mu; \nu_1) \,\Big\}
\\
\, &=\, \frac{1}{2}\, \< f_1 \,,\, \ms L_{H_\mu} \, ( - \ms
L^s_{H_\mu})^{-1} \, \ms L^*_{H_\mu} f_2 \>_\mu
\;,
\end{aligned}
\end{equation*}
where $f_i = {\rm d}\nu_i/{\rm d}\mu$.
\end{proposition}

\begin{proof}
This results follows from the explicit formula for the functional
derivative $\delta \, \ms I/\delta \mu$ derived in the previous lemma
and from formula \eqref{ap-09}.
\end{proof}

Recall that the second derivative of the large deviations principle
rate functional for i.i.d. random variables yields the inverse of the
variance. A similar statement holds in the context of finite state
Markov chains.

Fix a function $f\colon V\to \bb R$ which has $\pi$-mean zero.  Recall
from Theorem 2.7 and Corollary 2.11 in \cite{klo} that the
asymptotic variance of the central limit theorem for
\begin{equation*}
\frac{1}{\sqrt{t}} \int_0^t f(X_s)\, {\rm d}s 
\end{equation*}
under the stationary state $\pi$ is equal to
\begin{equation*}
\sigma^2(f)\,:=\,
2\, \< \ms L^{-1} f \,,\, (-\ms L^s) \, \ms L^{-1} f\>_\pi\;.
\end{equation*}
We claim that
\begin{equation}
\label{ap-11}
\frac{\delta^2 \, \ms I}{\delta \mu^2}  (\pi; \nu, \nu)\,=\,
\sup_{h} \big\{\, 2\<f,h\>_{\pi} \,-\, \sigma^2 (h)\,\big\}\;,
\end{equation}
where the supremum is carried over all $\pi$-mean zero functions $h$
and $f = {\rm d}\nu/{\rm d}\pi$.
Indeed, by Proposition \ref{ap-l7} and the variational formula for the norm
$\< \cdot, (-\ms L^s )^{-1} \cdot\>_{\pi}^{1/2}$, the left-hand side is equal to 
\begin{equation*}
\begin{aligned}
\frac{1}{2}\, \sup_{g} \big\{\, 2\, \<\ms L^* f,g\>_{\pi} \,-\,
\< g , (-\ms L^s) g \>_{\pi}\,\big\}
\,=\, \frac{1}{2}\, \sup_{g} \big\{\, 2\, \< f, \ms L g\>_{\pi} \,-\,
\< g , (-\ms L) g \>_{\pi}\,\big\}\;,
\end{aligned}
\end{equation*}
where the supremum is carried over all $\pi$-mean zero functions
$g$. We used here the definition of the adjoint and the symmetric part
of the generator $\ms L$. As the process is irreducible, the equation
$\ms L g =h$ has a solution for all $\pi$-mean zero functions
$h$. Performing this change of variables, the previous equation
becomes
\begin{equation*}
\begin{aligned}
& \frac{1}{2}\, \sup_{h} \big\{\, 2\, \< f, h\>_{\pi} \,-\,
\< \ms L^{-1} h , (-\ms L) \ms L^{-1} h \>_{\pi}\,\big\}
\\
&\quad \,=\,
\frac{1}{2}\, \sup_{h} \big\{\, 2\, \< f, h\>_{\pi} \,-\,
\< \ms L^{-1} h , (-\ms L^s) \ms L^{-1} h \>_{\pi}\,\big\}
\;,
\end{aligned}
\end{equation*}
where the supremum is carried over all $\pi$-mean zero functions
$h$. In view of the definition of $\sigma^2(h)$, the last line is
equal to
\begin{equation*}
\frac{1}{2}\, \sup_{h} \big\{\, 2\, \< f, h\>_{\pi} \,-\,
\frac{1}{2}\, \sigma^2(h) \,\big\}
\,=\, \sup_{h} \big\{\, 2\, \< f, h\>_{\pi} \,-\, \sigma^2(h) \,\big\}\;,
\end{equation*}
where we performed the change of variables $h'=(1/2) h$ in the last
step. This proves \eqref{ap-11}. \smallskip

By Lemma \ref{ap-l12}, there is a one-to-one correspondance between
measures $\mu\in\ms P_+(V)$ and functions $H\colon V\to \bb R$ such
that $H(x_0)=0$.  By Lemma \ref{ap-l8}, for each
$\mu\in\ms P_+(V)$ we may recover the function $H_\mu$ through the BFG
functional $I$. Therefore, we may consider the DV rate functional as a
functional defined on functions $H\colon V\to \bb R$ instead of
measures $\mu\in \ms P_+(V)$.  In particular, we may compute the
derivative of $\ms I$ with respect to $H$. This is the content of the
next lemma.

\begin{lemma}
\label{ap-l4}
Fix a function $H\colon V\to \bb R$. Let $\pi_\epsilon$, $\epsilon>0$,
be the stationary state of the Markov chain tilted by $\epsilon H$
(the jump rates of this chain are given by
$R_{\epsilon H}(x,y) = R(x,y) \, \exp \{\epsilon [H(y) -
H(x)]\}$). Then,
\begin{equation*}
\lim_{\epsilon\to 0}
\frac{1}{\epsilon^2}\, \ms I(\pi_\epsilon) \, =\, \< (-\ms L^s) H, H\>_\pi \;.
\end{equation*}
\end{lemma}

\begin{proof}
We claim that the sequence of measures $(\pi_\epsilon : \epsilon>0)$
converges to $\pi$ as $\epsilon\to 0$. Indeed, it is a bounded
sequence. Since it is invariant for the tilted dynamics,
\begin{equation*}
\pi_\epsilon(x) \sum_{y\in V} R(x,y)\, e^{\epsilon \, [H(y) \,-\, H(x)]}
\,=\,
\sum_{y\in V} \pi_\epsilon(y)  \, R(y,x)\, e^{\epsilon \,[H(x) \,-\, H(y)]}
\end{equation*}
for all $x\in V$. Letting $\epsilon\to 0$ yields that $\pi$ is the
unique limit point, which proves the claim.

Let $f_\epsilon\colon V \to \bb R$ be defined as
$\pi_\epsilon(x) = \pi(x) \,[\, 1 + \epsilon f_\epsilon(x)\,]$. Note
that $E_\pi[f_\epsilon]=0$ for every $\epsilon>0$. Moreover, from the
previous claim, $\epsilon f_\epsilon(x) \to 0$ for all $x\in V$.

Since $\pi_\epsilon$ is the stationary state of the tilted dynamics,
\begin{equation*}
\sum_{x, y \in V} \pi_\epsilon (x)\,
R(x,y)\, e^{\epsilon \,[H(y) \,-\, H(x)]} \,\{g(y) \,-\, g(x)\}\,=\, 0
\end{equation*}
for every function $g\colon V\to \bb R$. Replacing $\pi_\epsilon$ by
$\pi(x) \,[\, 1 + \epsilon f_\epsilon(x)\,]$, as $\pi$ is the
stationary state, yields that
\begin{equation}
\label{ap-05}
\begin{aligned}
0\, =&\, \sum_{x, y\in V} \pi (x)\,
R(x,y)\, [H(y) -H(x)] \,\{g(y) - g(x)\} \, +\,  O(\epsilon)
\\
\,& +\, \sum_{x, y\in V} \pi(x)\, f_\epsilon (x)\,
R(x,y)\, \,\{g(y) - g(x)\}
\\
\,& +\, \sum_{x, y\in V} \pi(x)\, \epsilon\, f_\epsilon (x)\,
R(x,y)\,\frac{1}{\epsilon} \big(e^{\epsilon [H(y) -H(x)]} -1\big)
\,\{g(y) - g(x)\}
\end{aligned}
\end{equation}
for every function $g\colon V\to \bb R$.  The first term on the
right-hand side is equal to $2\, \< (-\ms L^s) H, g\>_{\pi}$, 
the second one is equal to $\< f_\epsilon , \ms Lg\>_\pi$, and the
last term vanishes as $\epsilon\to 0$ because $\epsilon f_\epsilon(x)
\to 0$, while $\epsilon^{-1} (\exp\{\epsilon [H(y) -H(x)]\} -1\,)$ is
bounded. 

For each $x\in V$, let $g_x$ be the solution $\ms L g_x = h_x$, where
$h_x\colon V \to \bb R$ is the function given by $h_x(x) = (1-\pi_x)$,
$h_x(y) = -\pi_x$ for $y\neq x$. Since $h_x$ has mean zero with
respect to $\pi$, there is a solution to this equation. Inserting the
function $g_x$ in the previous displayed equation yields that
\begin{equation*}
0\, =\, 2\, \< (-\ms L^s) H, g_x\>_{\pi} \,+\,
\< f_\epsilon , h_x \>_\pi
\,+\;  O(\epsilon) 
\end{equation*}
for every $x\in V$. By definition of $h_x$,
\begin{equation*}
\< f_\epsilon , h_x \>_\pi \,=\, \pi_x \, f_\epsilon(x)
\end{equation*}
because $f_\epsilon$ has mean zero with respect to $\pi$. Therefore,
\begin{equation*}
\pi_x \, f_\epsilon (x) \,=\,
2\, \< \ms L^s H, g_x\>_{\pi} \,+\,  O(\epsilon) 
\end{equation*}
for all $x\in V$, so that $\lim_{\epsilon\to 0} f_\epsilon (x) = 2 \,\pi_x^{-1}\, \<\ms
L^s H, g_x\>_{\pi} =: f (x)$. Moreover, letting $\epsilon\to 0$ in
\eqref{ap-05},
\begin{equation}
\label{ap-06}
2\, \< \ms L^s H, g\>_{\pi} \, =\, \< f , \ms Lg\>_\pi
\end{equation}
for all $g\colon V\to \bb R$.

By \cite[Lemma A.3]{l-gamma},
\begin{equation}
\label{ap-03}
\ms I(\pi_\epsilon) \,=\, \sum_{x,y\in V} \pi_\epsilon(x)\, R(x,y)\,
\big[\, 1 \,-\, e^{\epsilon \, [H(y) \,-\, H(x)]}\,\big]\;.
\end{equation}
Replace $\pi_\epsilon(x)$ by $\pi(x) \, (1 + \epsilon f_\epsilon(x))$,
and expand in $\epsilon$ to obtain that
\begin{equation*}
\begin{aligned}
\frac{1}{\epsilon^2}\, 
\ms I(\pi_\epsilon) \, =\,
& -\, \frac{1}{\epsilon}\, \sum_{x,y\in V} \pi (x)\, R(x,y)\,
[H(y) \,-\, H(x)]
\\
& -\, \frac{1}{2}\, \sum_{x,y\in V} \pi (x)\, R(x,y)\,
[H(y) \,-\, H(x)]^2
\\
& -\, \sum_{x,y\in V} \pi(x) \, f_\epsilon(x)\, R(x,y)\,
[H(y) \,-\, H(x)] \,+\, O(\epsilon) \;.
\end{aligned}
\end{equation*}
The first line vanishes because $\pi$ is the stationary state. The
second one is equal to $\< \ms L^s H, H\>_\pi$. Since $f_\epsilon$
converges to $f$, by \eqref{ap-06}, the third sum converges to $-\, \<
f , \ms LH \>_\pi = 2\, \< (-\ms L^s) H, H\>_\pi $. This completes the
proof of the lemma.
\end{proof}

\subsection*{Acknowledgement}

S. K. has been supported by the KIAS Individual Grant (HP095101) at
the Korea Institute for Advanced Study. S. K. would like to thank IMPA
(Rio de Janeiro) for the warm hospitality during his stay in July
2024.  C. L. has been partially supported by FAPERJ CNE
E-26/201.117/2021, by CNPq Bolsa de Produtividade em Pesquisa PQ
305779/2022-2.

\end{document}